\providecommand{\U}[1]{\protect\rule{.1in}{.1in}}
\newtheorem{theorem}{Theorem}
\newtheorem{remark}[theorem]{Remark}
\newcommand{\vertiii}[1]{{\left\vert\kern-0.25ex\left\vert\kern-0.25ex\left\vert#1
\right\vert\kern-0.25ex\right\vert\kern-0.25ex\right\vert}}
\numberwithin{equation}{section}
\numberwithin{theorem}{section}
\numberwithin{figure}{section}
\newcommand{\beq}{\begin{equation}}
\newcommand{\eeq}{\end{equation}}
\begin{document}

\title{Adaptive Time Discretization for Retarded Potentials}
\author{S. Sauter\thanks{Institut f\"{u}r Mathematik, Universit\"{a}t Z\"{u}rich,
Winterthurerstrasse 190, 8057 Z\"{u}rich, Switzerland, e-mail:
\texttt{stas@math.uzh.ch}}
\and A. Veit\thanks{Department of Computer Science, University of Chicago, Chicago, Illinois 60637, USA, e-mail:
\texttt{aveit@uchicago.edu}} }
\date{}
\maketitle

\begin{abstract}
In this paper, we will present advanced discretization methods for solving
retarded potential integral equations. We employ a $C^{\infty}$-partition of
unity method in time and a conventional boundary element method for the
spatial discretization. One essential point for the algorithmic realization is
the development of an efficient method for approximation the elements of the
arising system matrix. We present here an approach which is based on
quadrature for (non-analytic) $C^{\infty}$ functions in combination with
certain Chebyshev expansions.

Furthermore we introduce an a posteriori error estimator for the time
discretization which is employed also as an error indicator for adaptive
refinement. Numerical experiments show the fast convergence of the proposed
quadrature method and the efficiency of the adaptive solution process.\medskip

\textbf{AMS subject classifications: }35L05, 65N38, 65R20.\medskip

\textbf{Keywords: } wave equation, retarded potential integral equation, a
posteriori error estimation, adaptive solution, numerical quadrature.

\end{abstract}

\section{Introduction}

In this paper, we will consider the efficient numerical solution of the wave
equation in unbounded domains. The exact solution is represented as a retarded
potential and the arising space-time boundary integral equation (RPIE) is
solved numerically by using a Galerkin method in time and space
(\cite{friedman}, \cite{Bam1}, \cite{HaDuong}).

The novelties compared to existing methods (\cite{Bam1}, \cite{Ding},
\cite{HaDuong}, \cite{HaDuong2}, \cite{Ned4}, \cite{Sauter5}, \cite{Weile3})
are as follows.

\begin{itemize}
\item[a)] We employ a $C^{\infty}$-partition of unity enriched by polynomials for the
temporal discretization as introduced in \cite{Sauter5}. This approach
overcomes the technical difficulty to first determine and then to integrate
over the intersection of the discrete light cone with the spatial mesh which
arises if conventional piecewise polynomial finite elements are employed in
time (cf. \cite{HaDuong2}). However, the arising quadrature problem for our
$C^{\infty}$ basis functions is not completely standard since the functions
are not analytic. In this paper we will propose an efficient method to
approximate the arising integrals and perform systematic numerical experiments
to demonstrate its fast convergence. It turns out that for the important range
of accuracies $\left[  10^{-1},10^{-8}\right]  $ the method converges nearly
as fast as for analytic integrands.

\item[b)] We present an a posteriori error estimator for retarded potential integral
equations which also is employed as a refinement indicator for an adaptive solution
process. To the best of our knowledge this is the first time that an
self-adaptive method is proposed for RPIE in 3D (for the 2D case we refer to the thesis \cite{Glaefke}; for adaptive versions of
the convolution quadrature method we refer to \cite{LopezSauter2011} and \cite{lopezsauter_paper3}). The error estimator is based on the
estimator which was proposed in \cite{Faermann}, \cite{FAERMANN3D} for
elliptic boundary integral equation. We will present numerical experiments
where the solution contains sharp pulses and/or oscillations at different time
scales and time windows. Our error indicator captures very well the
irregularities in the solution and marks for refinement at the
\textquotedblleft right\textquotedblright\ places. These experiments also
indicate that a global error estimator in time is essential for setting up an
adaptive method since it seems to be quite complicated for a \textit{time
stepping} scheme to detect the regions in the time history which causes the
error at the current time step.
\begin{remark}\label{rem_longTerm} We emphasize that the long term goal of this research
is to develop a space-time a posteriori error estimator and the
resulting algorithm should be fully space-time adpative.
In this paper we will present a purely temporal a posteriori error 
estimator. It turns out that
this algorithm is able to capture local irregularities with respect to 
time very well.
We expect that a generalization of this estimator to a
space-time adaptive method allows to reduce the dimensions of
spatial boundary element matrices substantially so that
the loss of the Toeplitz structure in the linear system
becomes negligible due to the much smaller dimension of the
full system matrix.
In any case, a reliable a posteriori error estimator is important
also for uniform mesh refinement and serves as a computable
upper bound for the error which can be used as a stopping criterion.
\end{remark}

\item[c)] We present systematic numerical experiments to understand i) the
convergence behavior of the spatial quadrature depending on the distance of
the pairs of panels and the width of the discrete light cone, ii) the
influence of the spatial quadrature to the overall discretization error as
well as the convergence rates with respect to the energy norm, iii) the long
term stability behavior of our space-time Galerkin approach also in
comparison with the convolution quadrature method (\cite{Lub1}), iv) the
performance of the new self-adaptive method which is based on our a posteriori
error estimator.
\end{itemize}
The paper is structured as follows. After the retarded potential integral
equation will be introduced in Section \ref{Sec2} we explain its numerical
discretization in Section \ref{SecNumDisc} as well as the numerical
approximation of the entries of the system matrix. In Section \ref{Sec:ErrEst}%
, the a posteriori error estimator is formulated and its numerical evaluation
is explained. Numerical experiments are presented in Section \ref{NumEx} which
give insights in the performance of the various discretization methods and
their influence to the overall discretization. The method and its main
features are summarized in the concluding Section \ref{SecConcl}.

\section{Integral Formulation of the Wave Equation\label{Sec2}}

Let $\Omega\subset\mathbb{R}^{3}$ be a Lipschitz domain with boundary $\Gamma
$. We consider the homogeneous wave equation%
\begin{subequations}
\label{FullProblem}
\end{subequations}%
%
\begin{equation}
\partial_{t}^{2}u-\Delta u=0\hspace{3mm}\text{in }\Omega\times\left[
0,T\right]  \tag{%
\ref{FullProblem}%
a}\label{WaveEquation}%
\end{equation}
with initial conditions%
\begin{equation}
u(\cdot,0)=\partial_{t}u(\cdot,0)=0\hspace{3mm}\text{in }\Omega\tag{%
\ref{FullProblem}%
b}\label{InitialConditions}%
\end{equation}
and Dirichlet boundary conditions%
\begin{equation}
u=g\hspace{3mm}\text{on }\Gamma\times\left[  0,T\right]  \tag{%
\ref{FullProblem}%
c}\label{BoundaryConditions}%
\end{equation}
on a time interval $\left[  0,T\right]  $ for $T>0$. In applications, $\Omega$
is often the unbounded exterior of a bounded domain. For such problems, the
method of boundary integral equations is an elegant tool where this partial
differential equation is transformed to an equation on the bounded surface
$\Gamma$. We employ an ansatz as a \textit{single layer potential} for the
solution $u$,
\begin{equation}
u(x,t):=S\phi(x,t):=\int_{\Gamma}\frac{\phi(y,t-\Vert x-y\Vert)}{4\pi\Vert
x-y\Vert}d\Gamma_{y},\hspace{3mm}\ (x,t)\in\Omega\times\left[  0,T\right]
\label{RetardedPotential}%
\end{equation}
with unknown density function $\phi$. $S$ is also referred to as
\textit{retarded single layer potential} due to the retarded time argument
$t-\Vert x-y\Vert$ which connects time and space variables.\vspace{0pt}

The ansatz (\ref{RetardedPotential}) satisfies the wave equation
(\ref{WaveEquation}) and the initial conditions (\ref{InitialConditions}).
Since the single layer potential can be extended continuously to the boundary
$\Gamma$, the unknown density function $\phi$ is determined such that the
boundary conditions (\ref{BoundaryConditions}) are satisfied. This results in
the boundary integral equation for $\phi$,
\begin{equation}
\int_{\Gamma}\frac{\phi(y,t-\Vert x-y\Vert)}{4\pi\Vert x-y\Vert}d\Gamma
_{y}=g(x,t)\hspace{3mm}\forall(x,t)\in\Gamma\times\left[  0,T\right]
.\label{BIE}%
\end{equation}
In order to solve this boundary integral equation numerically we introduce a
weak formulation of \eqref{BIE} according to \cite{Bam1, HaDuong}. Therefore
we introduce the space
\begin{align*}
H^{-1/2,-1/2}(\Gamma\times\lbrack0,T])&:=L^{2}([0,T],H^{-1/2}(\Gamma
))+H^{-1/2}([0,T],L^{2}(\Gamma)) .
\end{align*}
A suitable space-time variational formulation of \eqref{BIE} is then given by:
Find $\phi\in H^{-1/2,-1/2}(\Gamma\times\lbrack0,T])$ s.t.
\begin{align}
a(\phi,\zeta):=\int_{0}^{T}\int_{\Gamma}\int_{\Gamma}\frac{\dot{\phi
}(y,t-\Vert x-y\Vert)\zeta(x,t)}{4\pi\Vert x-y\Vert} &  d\Gamma_{y}d\Gamma
_{x}dt\nonumber\\
&  =\int_{0}^{T}\int_{\Gamma}\dot{g}(x,t)\zeta(x,t)d\Gamma_{x}dt=:b(\zeta
)\label{VarForm}%
\end{align}
for all $\zeta\in H^{-1/2,-1/2}(\Gamma\times\lbrack0,T])$, where we denote by
$\dot{\phi}$ the derivative with respect to time. It can be shown that
$a(\cdot,\cdot)$ is coercive in $H^{-1/2,-1/2}(\Gamma\times\lbrack0,T])$,
i.e.
\begin{equation}
a(\phi,\phi)\geq C\Vert\phi\Vert_{H^{-1/2,-1/2}(\Gamma\times\lbrack0,T])}%
^{2}.\label{coercivity}%
\end{equation}
This, together with an energy argument, can be used to show unconditional
stability of conforming Galerkin approximations (cf. \cite{Bam1, HaDuong}) of \eqref{VarForm}.

\section{Numerical Discretization\label{SecNumDisc}}

We discretize the variational problem \eqref{VarForm} using a Galerkin method
in space and time. Therefore we replace $H^{-1/2,-1/2}(\Gamma\times
\lbrack0,T])$ by a finite dimensional subspace $V_{\operatorname{Galerkin}}$
being spanned by some basis functions $\{b_{i}\}_{i=1}^{L}$ in time and some
basis functions $\{\varphi_{j}\}_{j=1}^{M}$ in space. This leads to the
discrete ansatz
\begin{equation}
\phi_{\operatorname{Galerkin}}(x,t)=\sum_{i=1}^{L}\sum_{j=1}^{M}\alpha_{i}%
^{j}\varphi_{j}(x)b_{i}(t),\hspace{3mm}(x,t)\in\Gamma\times\left[  0,T\right]
\label{DiscreteAnsatz}%
\end{equation}
for the approximate solution, where $\alpha_{i}^{j}$ are the unknown
coefficients. Plugging \eqref{DiscreteAnsatz} into the variational formulation
\eqref{VarForm} and using the basis functions $b_{k}$ and $\varphi_{l}$ as
test functions leads to the linear system
\[
\underline{\underline{\mathbf{A}}}\cdot\underline{\boldsymbol{\alpha}%
}=\underline{\mathbf{g}},
\]
where the block matrix $\underline{\underline{\mathbf{A}}}\in\mathbb{R}%
^{LM\times LM}$, the unknown coefficient vector $\underline{\boldsymbol{\alpha
}}\in\mathbb{R}^{LM}$ and the right-hand side vector $\underline{\mathbf{g}%
}\in\mathbb{R}^{LM}$ can be partitioned according to
\begin{equation}
\underline{\underline{\mathbf{A}}}:=%
\begin{bmatrix}
\mathbf{A}_{1,1} & \mathbf{A}_{1,2} & \cdots & \mathbf{A}_{1,L}\\
\mathbf{A}_{2,1} & \mathbf{A}_{2,2} & \cdots & \mathbf{A}_{2,L}\\
\vdots & \vdots & \ddots & \vdots\\
\mathbf{A}_{L,1} & \mathbf{A}_{L,2} & \cdots & \mathbf{A}_{L,L}%
\end{bmatrix}
,\qquad\underline{\boldsymbol{\alpha}}:=%
\begin{bmatrix}
\boldsymbol{\alpha}_{1}\\
\boldsymbol{\alpha}_{2}\\
\vdots\\
\boldsymbol{\alpha}_{L}%
\end{bmatrix}
,\qquad\underline{\mathbf{g}}:=%
\begin{bmatrix}
\mathbf{g}_{1}\\
\mathbf{g}_{2}\\
\vdots\\
\mathbf{g}_{L}%
\end{bmatrix}
,\label{A}%
\end{equation}
with
\[
\mathbf{A}_{k,i}\in\mathbb{R}^{M\times M},\quad\boldsymbol{\alpha}_{i}%
\in\mathbb{R}^{M},\quad\mathbf{g}_{k}\in\mathbb{R}^{M}\quad\text{ for }%
i,k\in\{1,\cdots,L\}.
\]
Their entries are given by
\begin{equation}
\mathbf{A}_{k,i}(j,l)=\int_{0}^{T}\int_{\Gamma}\int_{\Gamma}\frac{\varphi
_{j}(y)\,\varphi_{l}(x)}{4\pi\Vert x-y\Vert}\,\dot{b}_{i}(t-\Vert
x-y\Vert)b_{k}(t)\,d\Gamma_{y}d\Gamma_{x}dt\label{integrals}%
\end{equation}
and
\[
\boldsymbol{\alpha}_{i}(j)=\left(  \alpha_{i}^{j}\right)  _{j=1}^{M}%
,\quad\mathbf{g}_{k}(l)=\int_{0}^{T}\int_{\Gamma}\dot{g}(x,t)\,\varphi
_{l}(x)\,b_{k}(t)d\Gamma_{x}dt
\]
respectively. We rewrite \eqref{integrals} by introducing a univariate
function $\psi_{i,k}$ with
\begin{equation}
\psi_{k,i}(r)=\int_{0}^{T}\dot{b}_{i}(t-r)b_{k}(t)dt
\end{equation}
and obtain
\begin{align}
\mathbf{A}_{k,i}(j,l) &  =\int_{\Gamma}\int_{\Gamma}\frac{\varphi
_{j}(y)\,\varphi_{l}(x)}{4\pi\Vert x-y\Vert}\psi_{k,i}(\Vert x-y\Vert
)\,d\Gamma_{y}d\Gamma_{x}\nonumber\\
&  =\int_{\text{supp}(\varphi_{l})}\int_{\text{supp}(\varphi_{j})}%
\frac{\varphi_{j}(y)\,\varphi_{l}(x)}{4\pi\Vert x-y\Vert}\psi_{k,i}(\Vert
x-y\Vert)\,d\Gamma_{y}d\Gamma_{x}.\label{integrals2}%
\end{align}
The efficient and accurate computation of the matrix entries \eqref{integrals2}
is crucial for this method and represents a major challenge in the space-time
Galerkin approach. The choice of the basis functions in time plays here a
significant role.
In this paper we use smooth and compactly supported temporal shape functions
$b_{i}$ in \eqref{DiscreteAnsatz} whose definition was addressed in
\cite{Sauter5}. For the sake of a self-contained presentation we briefly
recall their definition. Let%
\[
f\left(  t\right)  :=\left\{
\begin{array}
[c]{ll}%
\frac{1}{2}\operatorname{erf}\left(  2\operatorname{artanh}t\right)  +\frac
{1}{2} & \left\vert t\right\vert <1,\\
0 & t\leq-1,\\
1 & t\geq1
\end{array}
\right.
\]
and note that $f\in C^{\infty}\left(  \mathbb{R}\right)  $. Next, we will
introduce some scaling. For a function $g\in C^{0}\left(  \left[  -1,1\right]
\right)  $ and real numbers $a<b$, we define $g_{a,b}\in C^{0}\left(  \left[
a,b\right]  \right)  $ by%
\[
g_{a,b}\left(  t\right)  :=g\left(  2\frac{t-a}{b-a}-1\right)  .
\]
We obtain a bump function on the interval $\left[  a,c\right]  $ with joint
$b\in\left(  a,c\right)  $ by%
\[
\rho_{a,b,c}\left(  t\right)  :=\left\{
\begin{array}
[c]{ll}%
f_{a,b}\left(  t\right)   & a\leq t\leq b,\\
1-f_{b,c}\left(  t\right)   & b\leq t\leq c,\\
0 & \text{otherwise.}%
\end{array}
\right.
\]
Let us now consider the closed interval $\left[  0,T\right]  $ and $l$ (not
necessarily equidistant) timesteps
\begin{equation}
0=t_{0}<t_{1}<\ldots t_{l-2}<t_{l-1}=T.\label{timegrid}%
\end{equation}
A smooth partition of unity of the interval $[0,T]$ then is defined by%
\[
\mu_{1}:=1-f_{t_{0},t_{1}},\quad\mu_{l}:=f_{t_{l-2,l-1}},\quad\forall2\leq
i\leq l-1:\mu_{i}:=\rho_{t_{i-2},t_{i-1},t_{i}}.
\]
Smooth and compactly supported basis functions $b_{i}$ in time can then be
obtained by multiplying these partition of unity functions with suitably
scaled Legendre polynomials (cf. \cite{Sauter5} for details):
\begin{alignat}{2}
&  \mu_{1}(t)\cdot8\cdot\left(  \frac{t}{t_{1}}\right)  ^{2}P_{m-2}\left(
\frac{2}{t_{1}}t-1\right)  \quad m=2,\ldots,\max(2,p), & \nonumber\\
&  \mu_{i}(t)P_{m}\left(  2\frac{t-t_{i-2}}{t_{i}-t_{i-2}}-1\right)\quad
m=0,\ldots,p,\hspace{1mm}i=2,\ldots,l-1, & \label{BasisFunctions}\\
&  \mu_{l}(t)P_{m}\left(  2\frac{t-t_{l-2}}{t_{l-1}-t_{l-2}}-1\right)  \quad
m=0,\ldots,p. & \nonumber
\end{alignat}
We will use the above basis functions in time for the Galerkin approximation
in \eqref{DiscreteAnsatz}. The order of the approximation in time can be
controlled by $p$ in \eqref{BasisFunctions}. For the choice $p=0$ the solution
is approximated in time merely with the partition of unity functions $\mu_{i}%
$. This corresponds to the approximation with piecewise constant functions in
the standard Galerkin approach.\newline For the discretization in space we use
standard piecewise polynomials basis functions $\varphi_{j}$.


\subsection{Efficient evaluation of $\psi_{k,i}$}

The approximation of the matrix entries using quadrature is the most time
consuming part of the method. In order to reduce the computational time, an
efficient evaluation of the integrand in \eqref{integrals2} is crucial. Since
$\psi_{k,i}$ consists itself of an integral this evaluation can typically not
be done exactly and has to be approximated. One obvious strategy is to apply
Gauss-Legendre quadrature also to the integral in $\psi_{k,i}$. In order to
obtain accurate results this unfortunately requires a relatively high number
of quadrature nodes and furthermore the basis functions $\dot{b_{i}}$ and
$b_{k}$ have to be evaluated multiple times which is itself expensive due to
the presence of the error function and the inverse hyperbolic tangent.\newline
In order to speed up the evaluation of \eqref{integrals2} we therefore want to
represent $\psi_{k,i}$ accurately by functions that are easy to construct and
allow a fast evaluation. Since $\psi_{k,i}$ is smooth and compactly supported
we choose piecewise Chebyshev polynomials for this task. We introduce%
\[
\text{min}_{k}:=\min\operatorname{supp}b_{k}\quad\text{ and }\quad
\text{max}_{k}:=\max\operatorname{supp}b_{k}%
\]
for all $1\leq k\leq L$, so that
\[
\operatorname{supp}\psi_{k,i}=[\text{min}_{k}-\text{max}_{i},\text{max}%
_{k}-\text{min}_{i}]=:[a,b].
\]
We divide $[a,b]$ into $m$ subintervals of length
\[
h_{m}:=\frac{b-a}{m}%
\]
and denote
\[
\Delta_{m,j}:=[a+(j-1)h_{m},a+jh_{m}]
\]
for $j=1,\ldots,m$. We approximate $\psi_{k,i}$ on each subinterval by a
linear combination of Chebyshev polynomials $T_{v}$ of degree $v$, i.e.,
\begin{equation}
\psi_{k,i}(r)|_{\Delta_{m,j}}\approx\sum_{v=0}^{q-1}c_{v}T_{v}(\varphi
(r))-\frac{1}{2}c_{0},\label{Chebapprox}%
\end{equation}
where
\[
\varphi:\Delta_{m,j}\rightarrow\lbrack-1,1],\quad r\mapsto\frac{2r-(\max
\Delta_{m,j}+\min\Delta_{m,j})}{\max\Delta_{m,j}-\min\Delta_{m,j}}%
\]
is an appropriate scaling function. The coefficients $c_{v}$ are defined by
\[
c_{v}=\frac{2}{q}\sum_{k=1}^{q}\psi_{k,i}\left(  \varphi^{-1}\left[
\cos\left(  \frac{\pi(k-0.5)}{q}\right)  \right]  \right)  \cos\left(
\frac{\pi v(k-0.5)}{q}\right)  \qquad0\leq v\leq q-1
\]
which can be evaluated efficiently using fast cosine transform methods. The
evaluation of the Chebyshev approximation \eqref{Chebapprox} can be done with
Clenshaw's recurrence formula (cf. \cite[Chapter 5.5]{Press92}).

\begin{remark}
The approximation of $\psi_{k,i}$ using the piecewise polynomials
\eqref{Chebapprox} requires the evaluation of $\psi_{k,i}$ at $q\cdot m$
different points. Note that this has to be done only once for each matrix
block $\mathbf{A}_{k,i}$. In order to obtain accurate results we therefore use
high-order Gauss-Legendre quadrature for the evaluation of $\psi_{k,i}$ at
these points.
\end{remark}

Numerical experiments indicate that the accuracy of the approximation in
\eqref{Chebapprox} has a significant impact on the accuracy of the
approximation of \eqref{integrals2} using Gauss-Legendre quadrature. The
number of subintervals $m$ and the polynomial degree $q$ of the piecewise
approximations \eqref{Chebapprox} should therefore be chosen such that the
error $\Vert\psi_{k,i}-\psi_{k,i}^{\text{approx}}\Vert_{\infty}$ is
sufficiently small; in our numerical
experiments a threshold of $10^{-8}$ for this error always preserved
the asymptotic convergence rates. We have performed numerical experiments to
assemble a table with optimal pairs $\left(  m,q\right)  $ for certain
accuracies. As model situations we have considered the (nonuniform) time grid
\[
t_{0}=0,\quad t_{1}=2,\quad t_{2}=3,\quad t_{3}=4.5,\quad t_{4}=7
\]
and chosen bump functions $\rho_{t_{0},t_{1},t_{2}}$ and $\rho_{t_{2}%
,t_{3},t_{4}}$ as above. Let%
\begin{alignat}{2}
&  b_{1}(t):=\rho_{t_{0},t_{1},t_{2}}(t), \quad && b_{2}(t):=\rho_{t_{2},t_{3},t_{4}%
}(t),  \nonumber\\
&  b_{3}(t):=\rho_{t_{0},t_{1},t_{2}}(t)P_{3}\left(  2\frac{t-t_{0}}%
{t_{2}-t_{0}}-1\right)  , \quad  && b_{4}(t):=\rho_{t_{2},t_{3},t_{4}}%
(t)P_{2}\left(  2\frac{t-t_{2}}{t_{4}-t_{2}}-1\right)    \nonumber
\end{alignat}
be functions of the type \eqref{BasisFunctions}. Next, we define
\[
\psi_{1}:\mathbb{R}\rightarrow\mathbb{R},r\mapsto\int_{0}^{7}\dot{b}%
_{1}(t-r)b_{2}(t)dt\quad\text{and}\quad\psi_{2}:\mathbb{R}\rightarrow
\mathbb{R},r\mapsto\int_{0}^{7}\dot{b}_{3}(t-r)b_{4}(t)dt.
\]

\begin{figure}[ptbh]
\centering
\begin{minipage}[t]{0.49\textwidth}
\includegraphics[width=1\textwidth]{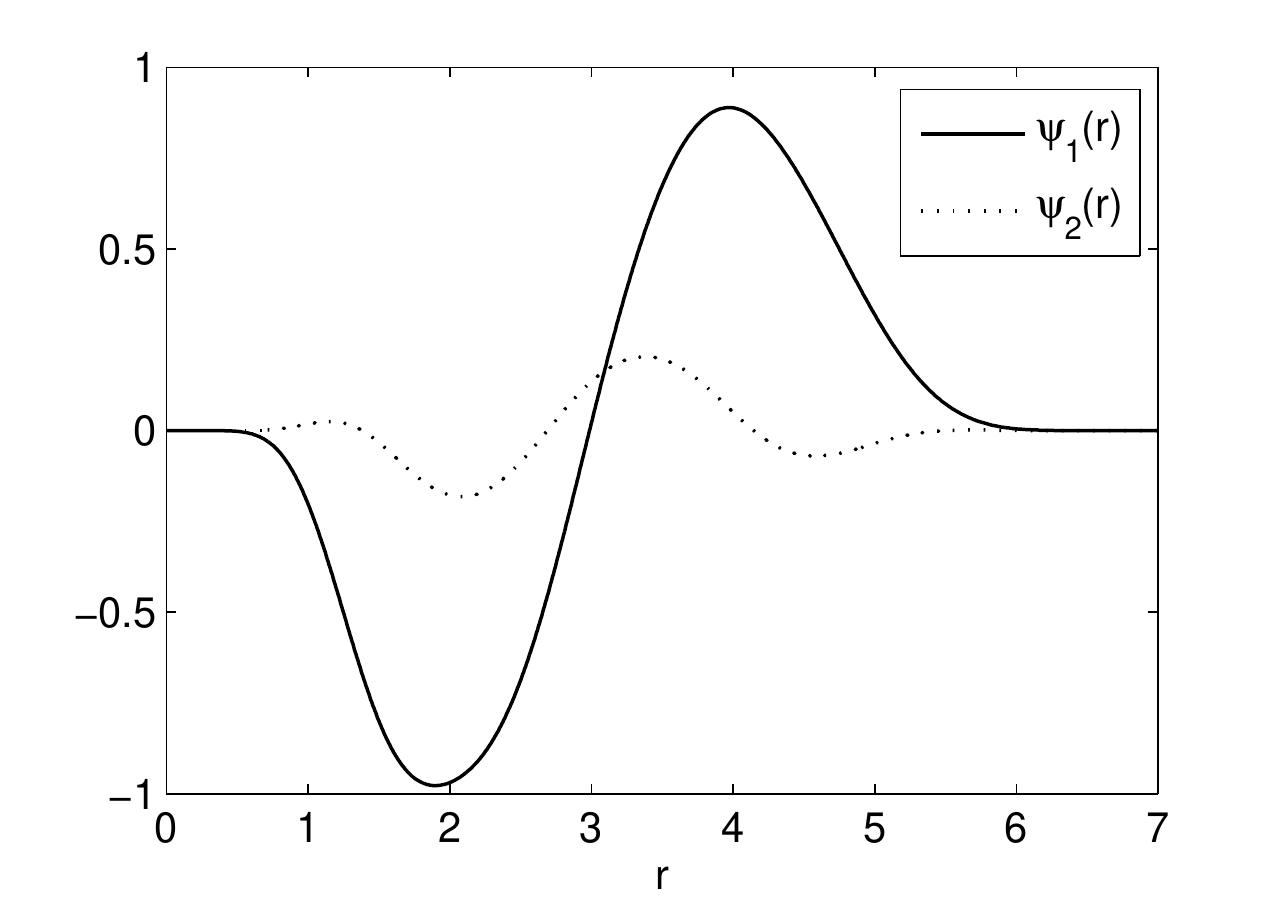}
\caption{$\psi_1(r)$ and $\psi_2(r)$}
\label{Fig:psi}
\end{minipage}
\begin{minipage}[t]{0.49\textwidth}
\includegraphics[width=1\textwidth]{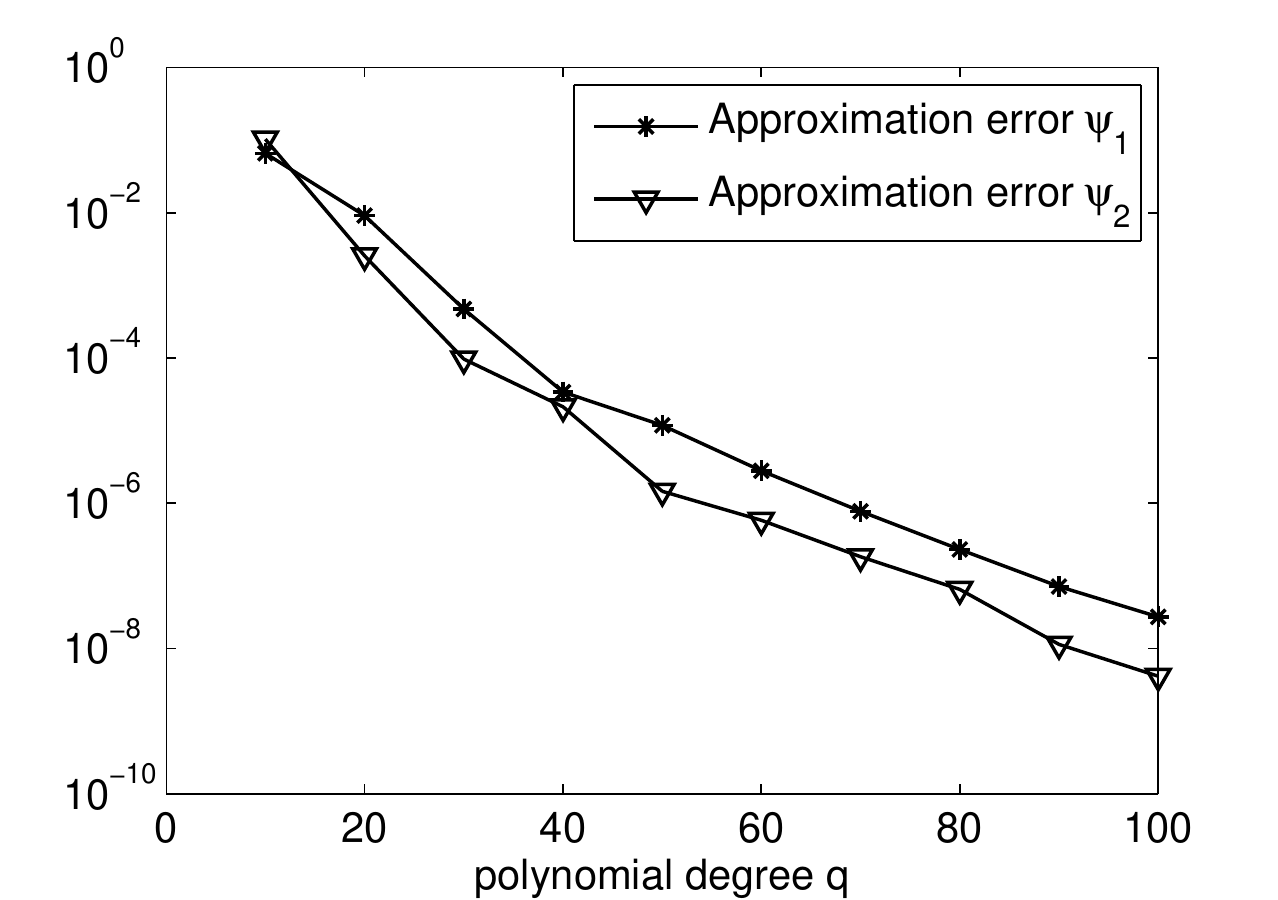}
\caption{$\|\psi_1-\psi_1^{\text{approx}}\|_{\infty}$ and $\|\psi_2-\psi_2^{\text{approx}}\|_{\infty}$ in dependence of $q$ for $m=1$. }
\label{Fig:psiErr}
\end{minipage}
\end{figure}

$\psi_{1}$ and $\psi_{2}$ are illustrated in Figure
\ref{Fig:psi}. Functions of type $\psi_{2}$ occur in the discretization
process if higher order methods in time are used. Although the higher order
basis functions $b_{3}$ and $b_{4}$ are more oscillatory than $b_{1}$ and
$b_{2}$, Figure \ref{Fig:psi} shows that the corresponding function $\psi_{2}$
is of similar shape than $\psi_{1}$ due to smoothing effect of the
integration.\newline Figure \ref{Fig:psiErr} shows the error that results from
the approximation of $\psi_{1}$ and $\psi_{2}$ with the Chebyshev
approximation \eqref{Chebapprox} of different polynomial degree $q$ on the
interval $[0,7]$, i.e. $l=1$. It becomes evident that the maximal pointwise
error decreases quickly with increasing $q$. However exponential convergence
cannot be observed due to the non-analyticity of $\psi_{1}$ and $\psi_{2}$. In
the following table we list the approximation errors for different values of
$m$ and $q$. They are chosen such that the original function has to be
evaluated 100 times to compute the approximation. Also from this table, we
conclude that the use of (moderately) high polynomial degrees in time does not
require a significantly higher number of quadrature points for the accurate
evaluation of the matrix entries \eqref{integrals2}.

\begin{table}[h]
\begin{center}%
\begin{tabular}
[c]{|c|c|c|c|}\hline
 $m$ & $q$ & $\Vert\psi_{1}-\psi_{1}^{\text{approx}}\Vert_{\infty}$ &
$\Vert\psi_{2}-\psi_{2}^{\text{approx}}\Vert_{\infty}$\\\hline
1 & 100 & $2.72\cdot10^{-8}$ & $4.16\cdot10^{-9}$\\
2 & 50 & $4.28\cdot10^{-8}$ & $1.88\cdot10^{-8}$\\
4 & 25 & $4.97\cdot10^{-8}$ & $2.60\cdot10^{-8}$\\
5 & 20 & $3.87\cdot10^{-8}$ & $1.34\cdot10^{-8}$\\
10 & 10 & $1.60\cdot10^{-7}$ & $2.19\cdot10^{-7}$\\
20 & 5 & $2.25\cdot10^{-5}$ & $1.14\cdot10^{-5}$\\
25 & 4 & $1.14\cdot10^{-4}$ & $4.99\cdot10^{-5}$\\
50 & 2 & $3.39\cdot10^{-3}$ & $1.43\cdot10^{-3}$\\\hline
\end{tabular}
\end{center}
\caption{Approximation errors for different values of $m$ and $q$.}
\end{table}

The table above shows that a low number of subintervals and a
modest polynomial degree is the best choice in terms of accuracy and
efficiency of the evaluation.


\subsection{Evaluation of the matrix entries}

\label{Sec:Entries} Let us assume that a triangulation $\mathcal{G}$ of
$\Gamma$ is given and that $\tau_{1},\tau_{2}\in\mathcal{G}$ are triangles of
size $O\left(  h\right)  $ in this triangulation. The computation of the
matrix entries $\eqref{integrals2}$ belonging to the matrix block
$\mathbf{A}_{k,i}$ requires the efficient approximation of integrals of the
form%
\begin{equation}
\int_{\tau_{1}}\int_{\tau_{2}}\frac{\varphi_{j}(y)\,\varphi_{l}(x)}{4\pi\Vert
x-y\Vert}\psi_{k,i}(\Vert x-y\Vert)\,d\Gamma_{y}d\Gamma_{x}.
\label{integrals3}%
\end{equation}
In order to evaluate \eqref{integrals3} we transform this integral to the
4-dimensional unit cube and apply tensor-Gauss-Legendre quadrature. In case
that $\tau_{1}$ and $\tau_{2}$ are identical, share a common edge or have a
common point we apply regularizing coordinate transformations (cf.
\cite{sauter2010boundary}) which remove the spatial singularity at $x=y$ via
the determinant of the Jacobian and also allow the use of standard
tensor-Gauss quadrature.\vspace{0pt}

The convergence analysis of tensor-Gauss-Legendre quadrature for integrals of
type \eqref{integrals3} is not straightforward since standard tools cannot be
used due to the non-analyticity of the involved integrands. Precise knowledge
about the growth behavior of the derivative of the integrands is necessary in
order to estimate the quadrature error. Since the derivatives of these
functions grow typically much faster than for analytic integrands, error
estimates must be used that use only lower order derivatives of the involved
functions (see \cite{Trefethen}). An analysis of the growth behavior of the
derivatives of the partition of unity function $\rho_{a,b,c}$ and the
corresponding quadrature error analysis was given in \cite{Sauter5}. The
analysis was extended to integrals of type \eqref{integrals3} in \cite{Schmid}
in the case that the triangles $\tau_{1}$ and $\tau_{2}$ have positive
distance.\newline Let $E_{n}$ denote the error of the tensor-Gauss-Legendre
quadrature approximation to the integral \eqref{integrals3}, where $n$
quadrature points in each direction are used (total number of quadrature
points: $n^{4}$).

\begin{theorem}
\label{THMQuadError} Let the triangles $\tau_{1}$ and $\tau_{2}$ in
\eqref{integrals3} have positive distance $D$ and let $\lambda\in(0,\frac
{2}{3})$. Then, there exists $n_{\lambda}\in\mathbb{N}$ such that for all
$n>n_{\lambda}$ it holds
\[
E_{n}\leq C\cdot\frac{\ln(n)^{\frac{1}{2}}}{\ln(n)-2}\cdot n^{-\lambda
\ln(n)+2}.
\]
The constants $C$ and $n_{\lambda}$ depend on the degrees of the involved
basis functions in space and time, on the distance $D$, and the size of the triangles.
\end{theorem}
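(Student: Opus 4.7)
The plan is to pull the integral in \eqref{integrals3} back to the reference cube $[0,1]^{4}$ by affine parametrisations of $\tau_{1}$ and $\tau_{2}$, so that tensor-Gauss-Legendre quadrature factors into a product of four univariate Gauss rules. Because $\operatorname{dist}(\tau_{1},\tau_{2})\ge D>0$, the factor $\varphi_{j}(y)\varphi_{l}(x)/(4\pi\|x-y\|)$ extends analytically into a neighbourhood of the reference cube whose polyradius is bounded below in terms of $D$ and the triangle diameter $h$; this piece therefore contributes only controlled geometric constants. The only obstruction to spectral convergence is the univariate radial profile $\psi_{k,i}(\|x-y\|)$: $\psi_{k,i}$ is $C^{\infty}$ but only inherits the saturated smoothness of the building block $f(t)=\tfrac{1}{2}\operatorname{erf}(2\operatorname{artanh}t)+\tfrac{1}{2}$ through the bumps $\rho_{a,b,c}$.

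\textbf{Truncated quadrature error.} For the univariate Gauss rule with $n$ nodes I would use the classical truncated error estimate
\[
|E_{n}(F)|\;\le\;C\,n^{-k}\,\bigl\|F^{(k)}\bigr\|_{\infty}\qquad(1\le k\le 2n-1),
\]
which follows from Jackson's theorem via the Peano-kernel representation and does not require analyticity (cf.\ \cite{Trefethen}). Applied tensorially in all four integration directions and combined with the Bernstein bounds for the analytic factors, this reduces the problem to controlling the one-variable derivative norm $\|\psi_{k,i}^{(k)}\|_{\infty}$ for every admissible $k$.

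\textbf{Derivative bound and optimisation in $k$.} For $\psi_{k,i}$ I would invoke the derivative estimates for $\rho_{a,b,c}$ proved in \cite{Sauter5} and adapted in \cite{Schmid} to the present two-triangle integral. These yield a super-factorial growth of the form
\[
\bigl\|\psi_{k,i}^{(n)}\bigr\|_{\infty}\;\le\;C\,K^{n}\exp\!\bigl(n^{2}/\alpha\bigr)
\]
for an explicit $\alpha>0$: the saturation exponent $n^{2}/\alpha$ arises from the supremum of $e^{nv-v^{2}/\alpha'}$ over $v$ when the Gaussian factor in $f^{(n)}$ is rewritten in the variable $v=\log\tfrac{1}{1-t^{2}}$. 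Inserting this into the truncated Gauss bound yields
\[
E_{n}\;\le\;C\,(K/n)^{k}\exp\!\bigl(k^{2}/\alpha\bigr),\qquad 1\le k\le 2n-1,
\]
and I would then optimise in $k$: setting the logarithmic derivative to zero gives $k^{\ast}\sim\tfrac{\alpha}{2}\ln(n/K)$, and substituting back produces $E_{n}\le\exp\!\bigl(-\tfrac{\alpha}{4}(\ln n)^{2}+O(\ln n)\bigr)=n^{-\lambda\ln n+2}$ for every $\lambda<\alpha/4$. The Stirling-type corrections at the optimum account for the advertised prefactor $\ln(n)^{1/2}/(\ln(n)-2)$, and $n_{\lambda}$ is simply the smallest $n$ for which $k^{\ast}\le 2n-1$. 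The range $\lambda<2/3$ corresponds to the sharp value of $\alpha$ that can be propagated through all of the subsequent steps.

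\textbf{Main obstacle.} The essential difficulty is not the one-dimensional optimisation but the derivative bound on $\psi_{k,i}$. One has to transport the $\exp(n^{2}/\alpha)$ estimate on $f^{(n)}$ through multiplication by the scaled Legendre polynomials in \eqref{BasisFunctions} (Leibniz, costing only polynomial-in-$n$ overhead), through the time convolution $\psi_{k,i}(r)=\int_{0}^{T}\dot b_{i}(t-r)b_{k}(t)\,dt$, and finally through pre-composition with $(x,y)\mapsto\|x-y\|$, which is analytic on $\tau_{1}\times\tau_{2}$ with radius of analyticity controlled by $D$. Keeping the constant $\alpha$ as large as possible at each of these steps, so that $\lambda$ may be pushed up to $2/3$, requires the careful Faà di Bruno and Leibniz bookkeeping carried out in \cite{Sauter5} and \cite{Schmid}; once this derivative estimate is in hand, the calculus optimisation producing the stated rate is routine.
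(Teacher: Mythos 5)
Your proposal reconstructs exactly the argument the paper relies on: the paper's own proof is a one-line citation to \cite[Section 5.5]{Schmid}, and the surrounding discussion confirms that the mechanism there is precisely what you describe --- a truncated Gauss error bound in the sense of \cite{Trefethen} using only finite-order derivatives, combined with the $\exp(n^{2}/\alpha)$-type derivative growth of the $\operatorname{erf}\circ\operatorname{artanh}$ bump from \cite{Sauter5}, followed by optimisation over the derivative order $k^{\ast}\sim\ln n$, which is what produces the $n^{-\lambda\ln n}$ rate rather than root-exponential convergence. This is essentially the same approach as the paper (indeed more detailed than what the paper prints), so no further comparison is needed.
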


\begin{proof}
The theorem follows directly from the results in \cite[Section 5.5]{Schmid}.
\end{proof}

Theorem \ref{THMQuadError} shows that the quadrature error decays
superalgebraically with respect to the number of quadrature nodes $n$. This
result cannot be improved to exponential convergence by a refined analysis (at
least when the error estimate in \cite{Trefethen} is used) and is in this
sense (asymptotically) sharp. In practical computations, however, it becomes
evident that the actual quadrature error decays considerably faster in a
preasymptotic range.\vspace{\baselineskip}

In the following we perform various numerical experiments which show the
performance of the quadrature scheme for integrals of type \eqref{integrals3}
(see \cite{Schmid, KhSaVe}). We distinguish between singular integrals
(identical panels, common edge) and regular integrals where the triangles
$\tau_{1}$ and $\tau_{2}$ have positive distance. Here we furthermore
distinguish between near field integrals where $\text{dist}(t,\tau)\sim h$ and
far field integrals where $\text{dist}(t,\tau)\sim O(1)$ (see
\cite{sauter2010boundary}). We use different triangles $\tau_{1}$ and
$\tau_{2}$ and different time grids to cover various situations. We consider
piecewise constant basis functions in space and denote by
\[
b_{t_{i}}(t):=\rho_{t_{i},t_{i+1},t_{i+2}}(t)P_{1}\left(  2\frac{t-t_{i}%
}{t_{i+2}-t_{i}}-1\right)
\]
the basis functions in time that will be used in the experiments. The
resulting integrals which will be approximated by tensor-Gauss-Legendre
quadrature (after a (regularizing) transformation to the reference element)
are therefore of the form
\begin{equation}
\int_{\tau_{1}}\int_{\tau_{2}}\frac{\psi_{t_{i}}^{t_{j}}(\Vert x-y\Vert)}%
{4\pi\Vert x-y\Vert}\,d\Gamma_{y}d\Gamma_{x}\quad\text{with}\quad\psi_{t_{i}%
}^{t_{j}}(r):=\int_{0}^{T}\dot{b}_{t_{i}}(t-r)b_{t_{j}}(t)dt.
\label{IntegralsToApprox}%
\end{equation}
More precisely we consider the following settings:\\[3mm]\textbf{Case 1:}
\emph{Identical panels, completely enlighted}\\[-2mm]

\hspace{5mm}Triangles:
\[
\tau_{1} = \tau_{2} = \text{conv}\left\lbrace (0,0,0)^{\operatorname{T}%
},(1,0,0)^{\operatorname{T}},(1,1,0)^{\operatorname{T}} \right\rbrace .
\]
\hspace{5mm} Time grid:
\[
t_{0} = 0,\quad t_{1} = 1.2,\quad t_{2} = 2,\quad,t_{3} = 2.9
\]
\hspace{5mm} and the integrand $\psi_{t_{0}}^{t_{1}}$ in
\eqref{IntegralsToApprox} such that $\text{supp}\,\psi_{t_{0}}^{t_{1}} = [0,
2.9] $.\\[3mm]\textbf{Case 2:} \emph{Panels with common edge, partially
enlighted}\\[-2mm]

\hspace{5mm}Triangles:
\begin{align*}
\tau_{1}  &  = \text{conv}\left\lbrace (0,0,0)^{\operatorname{T}%
},(1,0,0)^{\operatorname{T}},(1,1,0)^{\operatorname{T}} \right\rbrace ,\\
\tau_{2}  &  = \text{conv}\left\lbrace (0,0,0)^{\operatorname{T}%
},(1,0,0)^{\operatorname{T}},(1,-1,0.5)^{\operatorname{T}} \right\rbrace .
\end{align*}
\hspace{5mm}Time grid:
\[
t_{0} = 0,\quad t_{1} = 1.1,\quad t_{2} = 2.1,\quad t_{3} = 2.9,\quad t_{4} =
4,\quad t_{5} = 5
\]
\hspace{5mm}and the integrand $\psi_{t_{0}}^{t_{3}}$ in
\eqref{IntegralsToApprox} such that $\text{supp}\,\psi_{t_{0}}^{t_{3}} = [0.8,
5]$.\\[3mm]\textbf{Case 3:} \emph{Panels with positive distance, near field,
completely enlighted}\\[-2mm]

\hspace{5mm}Triangles:
\begin{align*}
\tau_{1}  &  = \text{conv}\left\lbrace (0,0,0)^{\operatorname{T}%
},(1,0,0)^{\operatorname{T}},(1,1,0)^{\operatorname{T}} \right\rbrace ,\\
\tau_{2}  &  = \text{conv}\left\lbrace (1,0,0)^{\operatorname{T}%
},(1,0.9,0)^{\operatorname{T}},(0,1,0.2)^{\operatorname{T}} \right\rbrace +
(2,2,2)^{\operatorname{T}}.
\end{align*}
\hspace{5mm}Time grid:
\[
t_{0} = 0,\quad t_{1} = 1.2,\quad t_{2} = 2.1,\quad t_{3} = 3.9,\quad t_{4} =
5.1,\quad t_{5} = 6
\]
\hspace{5mm}and the integrand $\psi_{t_{0}}^{t_{3}}$ in
\eqref{IntegralsToApprox} such that $\text{supp}\,\psi_{t_{0}}^{t_{3}} = [1.8,
6]$.\\[3mm]\textbf{Case 4:} \emph{Panels with positive distance, far field,
partially enlighted}\\[-2mm]

\hspace{5mm}Triangles:
\begin{align*}
\tau_{1}  &  = \text{conv}\left\lbrace (0,0,0)^{\operatorname{T}%
},(1,0,0)^{\operatorname{T}},(1,1,0)^{\operatorname{T}} \right\rbrace ,\\
\tau_{2}  &  = \text{conv}\left\lbrace (1,0,0)^{\operatorname{T}%
},(1,0.9,0)^{\operatorname{T}},(0,1,0.2)^{\operatorname{T}} \right\rbrace +
(20,20,20)^{\operatorname{T}}.
\end{align*}
\hspace{5mm}Time grid:
\[
t_{0} = 0,\quad t_{1} = 1.2,\quad t_{2} = 2.1,\quad t_{3} = 30.5,\quad t_{4} =
31.6,\quad t_{5} = 32.6
\]
\hspace{5mm}and the integrand $\psi_{t_{0}}^{t_{3}}$ in
\eqref{IntegralsToApprox} such that $\text{supp}\,\psi_{t_{0}}^{t_{3}} =
[28.4, 32.6]$.\\[3mm]

Note that the time stepsizes were chosen such that they correspond
approximately to the diameter of the triangles.\newline Figure \ref{quadError}
shows the convergence of tensor-Gauss-Legendre quadrature for integrals of
type \eqref{IntegralsToApprox} for the different cases described above. It
becomes evident that the error decays quickly in all four cases, especially in
the preasymptotic regime. As Theorem \ref{THMQuadError} predicts, exponential
convergence cannot be observed for medium and higher numbers of quadrature
nodes for such smooth but non-analytic integrands. In
Section \ref{NumEx} we report on numerical experiments for studying the
influence of the quadrature error to the overall accuracy. It turns out that
the necessary number of quadrature nodes is very moderate.

\begin{figure}[th]
\centering
\includegraphics[width=1.0\textwidth]{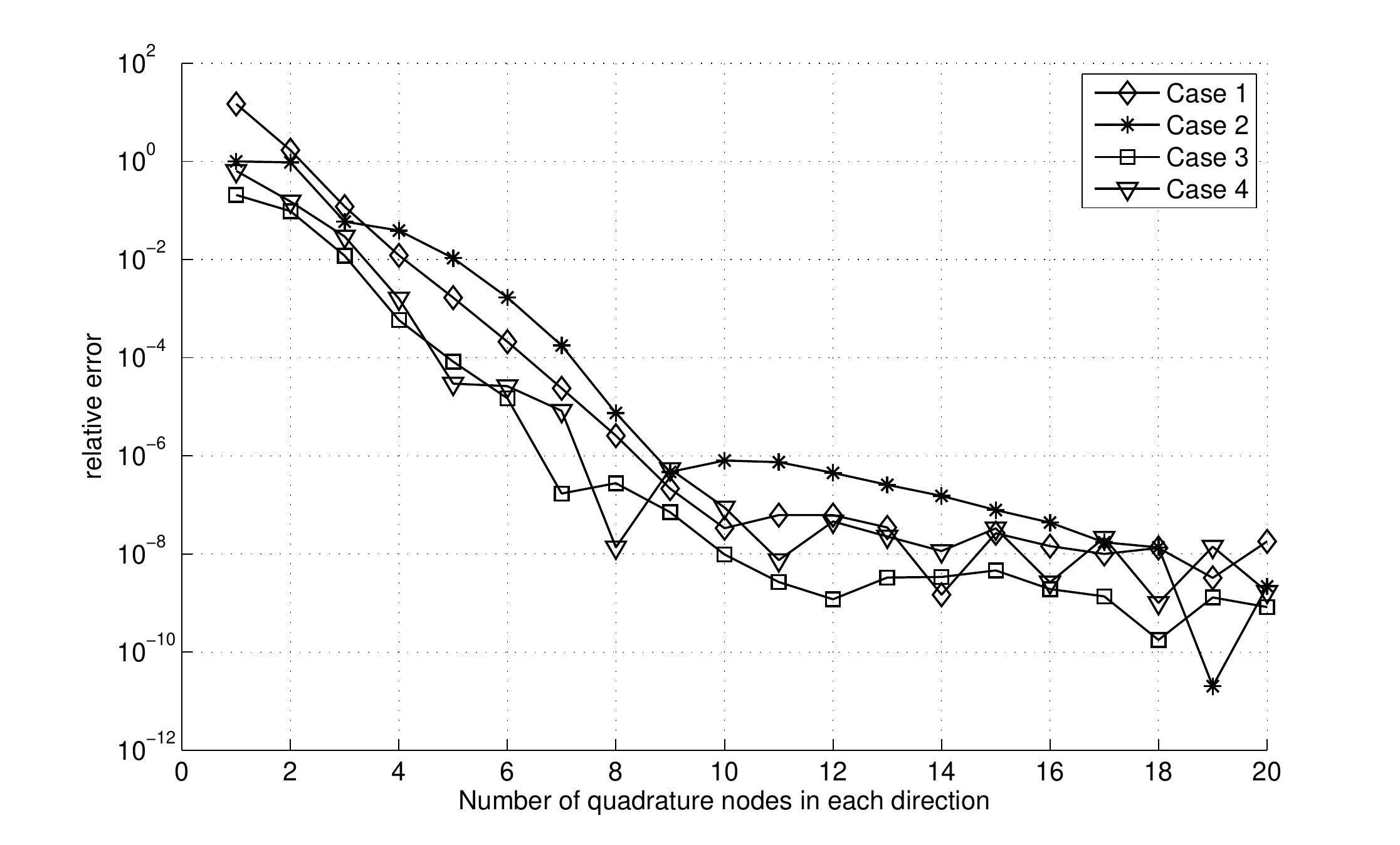}\caption{Convergence of
tensor Gauss-Legendre quadrature for integrals of type
\eqref{IntegralsToApprox} for the cases 1-4.}%
\label{quadError}%
\end{figure}


\subsection{Computation of the $H^{-1/2,-1/2}(\mathbb{S}^{2}\times
[0,T])$-norm}

\label{Sec:Norm} In Section \ref{NumEx} we perform numerical experiments for a
spherical scatterer, i.e. $\Gamma=\mathbb{S}^{2}$, and special right-hand
sides of the form $g(x,t)=g(t)Y_{n}^{m},t\in\lbrack0,T]$, where $Y_{n}^{m}$
are the spherical harmonics of degree $n$ and order $m$. In this case the
exact solution of the scattering problem also decouples in space and time and
is of the form
\[
\phi_{\text{exact}}(x,t)=\phi_{\text{exact}}(t)Y_{n}^{m}\quad\text{with}%
\quad(x,t)\in\mathbb{S}^{2}\times\lbrack0,T].
\]
Explicit representations of these exact solutions were derived in
\cite{Sauter4} and will be used as reference solutions to test the numerical
algorithm. In order to estimate the error of the Galerkin approximation
$\phi_{\text{Galerkin}}$ a computation of the $H^{-1/2,-1/2}(\mathbb{S}%
^{2}\times\lbrack0,T])$-norm is necessary. Since this norm is difficult to
compute directly we use the sesquilinear form \eqref{VarForm} with its
coercivity property \eqref{coercivity} in order to obtain an upper bound for
this norm (up to a constant).\\
In this article we consider only boundary
element meshes consisting of flat triangles whose union defines a polyhedral
surface approximation $\Gamma_{h}$ of the original surface $\Gamma$. Hence,
the exact Galerkin solution is perturbed due to this surface approximation and
we denote the sesquilinear form on $\Gamma_{h}$ by $a_{h}\left(  \cdot
,\cdot\right)  $. In order to compare the exact solution with the Galerkin
solution we will project the exact solution $\phi_{\operatorname*{exact}}$ to
the approximate surface $\Gamma_{h}$ resulting in a function $\phi
_{\text{exact}}^{h}$ on $\Gamma_{h}$. To measure the difference $\phi
_{\operatorname*{exact}}^{h}-\phi_{\operatorname*{Galerkin}}$ in an
approximated (squared) energy norm we plug it into the sesquilinear form
$a_{h}(\cdot,\cdot)$. Let us assume as before that the Galerkin solution is
defined on $\Gamma_{h}$ and that
\[
\phi_{\text{Galerkin}}(x,t)\in V_{\text{Galerkin}}:=\text{span}\left\{
b_{i}(t)\varphi_{j}(x),1\leq i\leq L,1\leq j\leq M\right\}  .
\]
Since in Section \ref{NumEx} we will mainly focus on the properties of the
temporal discretization we introduce a discrete space on a fine time grid
(using possibly higher order basis functions in time) which uses the same
basis functions in space as $V_{\text{Galerkin}}$:
\[
V_{\text{Galerkin}}^{\text{fine}}:=\text{span}\left\{  \tilde{b}_{i}%
(t)\varphi_{j}(x),1\leq i\leq\tilde{L},1\leq j\leq M\right\}  \subset
H^{-1/2,-1/2}(\Gamma_{h}\times\lbrack0,T]).
\]
We now approximate $\phi_{\text{exact}}$ and $\phi_{\text{Galerkin}}$ with
functions $\phi_{\text{exact}}^{h,\tilde{L}},\phi_{\operatorname{Galerkin}%
}^{\tilde{L}}\in V_{\text{Galerkin}}^{\text{fine}}$ in order to efficiently
evaluate the associated sesquilinear form.\vspace{0pt}

For the spatial approximation of $\phi_{\text{exact}}$ we note that in the
case of piecewise constant basis functions in space every $\varphi_{j},1\leq
j\leq M$ is associated with a triangle $\Delta_{j}=\text{conv}\left\{
A_{j},B_{j},C_{j}\right\}  $, where $A_{j},B_{j},C_{j}\in\Gamma$. An
approximation of $\phi_{\text{exact}}$ defined on $\Gamma_{h}\times
\lbrack0,T]$ is then defined by
\[
\phi_{\text{exact}}^{h}(x,t):=\phi_{\text{exact}}(t)\sum_{j=1}^{M}c_{j}%
^{h}\varphi_{j}(x),\quad\text{with}\quad c_{j}^{h}=Y_{n}^{m}|_{D_{j}}%
\quad\text{where}\quad D_{j}=\frac{A_{j}+B_{j}+C_{j}}{\Vert A_{j}+B_{j}%
+C_{j}\Vert}.
\]
In the case of piecewise linear basis functions in space every $\varphi
_{j},1\leq j\leq M$, is associated with a node $C_{j}\in\Gamma$ of the spatial
mesh. A suitable approximation of $\phi_{\text{exact}}$ defined on $\Gamma
_{h}\times\lbrack0,T]$ is in this case defined by
\[
\phi_{\text{exact}}^{h}(x,t):=\phi_{\text{exact}}(t)\sum_{j=1}^{M}c_{j}%
^{h}\varphi_{j}(x),\quad\text{with}\quad c_{j}=Y_{n}^{m}|_{C_{j}}.
\]
In order to obtain an approximation of $\phi_{\text{exact}}^{h}(x,t)$ in the
space $V_{\text{Galerkin}}^{\text{fine}}$ we further approximate the temporal
part $\phi_{\text{exact}}(t)$ with its best $L_{2}$-approximation in
$\text{span}\left\{  \tilde{b}_{i},1\leq i\leq\tilde{L}\right\}  $ on the fine
time grid. This leads to
\[
\phi_{\text{exact}}(x,t)\approx\phi_{\text{exact}}^{h,\tilde{L}}%
(x,t):=\sum_{i=1}^{\tilde{L}}\sum_{j=1}^{M}c_{i}^{\tilde{L}}c_{j}^{h}%
\varphi_{j}(x)\tilde{b}_{i}(t).
\]

Finally, the function
\[
\phi_{\operatorname{Galerkin}}(x,t)=\sum_{i=1}^{L}\sum_{j=1}^{M}\alpha_{i}%
^{j}\varphi_{j}(x)b_{i}(t)=\sum_{j=1}^{M}\varphi_{j}(x)\underbrace{\sum
_{i=1}^{L}\alpha_{i}^{j}b_{i}(t)}_{\phi_{j}(t)}%
\]
has to be approximated with a function in $V_{\text{Galerkin}}^{\text{fine}}$.
For this we approximate the function $\phi_{j}(t)$ for every $1\leq j\leq M$
again with its best $L_2$-approximation in
$\text{span}\left\{  \tilde{b}_{i},1\leq i\leq\tilde{L}\right\}  $. This
defines coefficients $\tilde{\alpha}_{i}^{j}$ such that
\[
\phi_{\operatorname{Galerkin}}(x,t)\approx\sum_{i=1}^{\tilde{L}}\sum_{j=1}%
^{M}\tilde{\alpha}_{i}^{j}\varphi_{j}(x)\tilde{b}_{i}(t)=:\phi
_{\operatorname{Galerkin}}^{\tilde{L}}(x,t).
\]

In order to estimate the error of the Galerkin approximation we denote
$\text{err}_{G}:=\Vert\phi_{\text{Galerkin}}^{\tilde{L}}-\phi_{\text{exact}%
}^{h,\tilde{L}}\Vert_{H^{-1/2,-1/2}(\Gamma_{h}\times\lbrack0,T])}$. Since
\[
\phi_{\text{Galerkin}}^{\tilde{L}}-\phi_{\text{exact}}^{h,\tilde{L}}%
=\sum_{i=1}^{\tilde{L}}\sum_{j=1}^{M}(\tilde{\alpha}_{i}^{j}-c_{i}^{\tilde{L}%
}c_{j}^{h})\varphi_{j}(x)\tilde{b}_{i}(t),
\]
the coercivity estimate \eqref{coercivity} leads to
\begin{align*}
\text{err}_{G}^{2} &  \lesssim a(\phi_{\text{Galerkin}}^{\tilde{L}}%
-\phi_{\text{exact}}^{h,\tilde{L}},\phi_{\text{Galerkin}}^{\tilde{L}}%
-\phi_{\text{exact}}^{h,\tilde{L}})\\
&  =\sum_{i=1}^{\tilde{L}}\sum_{k=1}^{\tilde{L}}\sum_{j=1}^{M}\sum_{l=1}%
^{M}\int_{0}^{T}\int_{\Gamma}\int_{\Gamma}(\tilde{\alpha}_{i}^{j}%
-c_{i}^{\tilde{L}}c_{j}^{h})(\tilde{\alpha}_{k}^{l}-c_{k}^{\tilde{L}}c_{l}%
^{h})\frac{\dot{\tilde{b}}_{i}(t-\Vert x-y\Vert)\varphi_{j}(y)\tilde{b}%
_{k}(t)\varphi_{l}(x)}{4\pi\Vert x-y\Vert}d\Gamma_{y}d\Gamma_{x}dt\\
&  =\sum_{i=1}^{\tilde{L}}\sum_{k=1}^{\tilde{L}}\sum_{j=1}^{M}\sum_{l=1}%
^{M}(\tilde{\alpha}_{i}^{j}-c_{i}^{\tilde{L}}c_{j}^{h})\mathbf{\tilde{A}%
}_{k,i}(j,l)(\tilde{\alpha}_{k}^{l}-c_{k}^{\tilde{L}}c_{l}^{h})\\
&  =(\boldsymbol{\tilde{\alpha}}-\mathbf{c})^{\operatorname{T}}\underline
{\underline{\mathbf{\tilde{A}}}}(\boldsymbol{\tilde{\alpha}}-\mathbf{c})
\end{align*}
with
\[
\boldsymbol{\tilde{\alpha}}=\left(  \boldsymbol{\tilde{\alpha}}_{i}\right)
_{i=1}^{\tilde{L}},\quad\text{where}\quad\boldsymbol{\tilde{\alpha}}%
_{i}(j)=\left(  \tilde{\alpha}_{i}^{j}\right)  _{j=1}^{\tilde{M}}%
\]
and in the same way
\[
\mathbf{c}=\left(  \boldsymbol{c}_{i}\right)  _{i=1}^{\tilde{L}}%
,\quad\text{where}\quad\mathbf{c}_{i}(j)=\left(  c_{i}^{\Delta t}c_{j}%
^{h}\right)  _{j=1}^{M}.
\]
We therefore use the quantities
\begin{equation}
\text{err}(\phi_{\text{exact}},\phi_{\text{Galerkin}}):=\sqrt
{(\boldsymbol{\tilde{\alpha}}-\mathbf{c})^{\operatorname{T}}\underline
{\underline{\mathbf{\tilde{A}}}}(\boldsymbol{\tilde{\alpha}}-\mathbf{c}%
)}\label{errorMeasure}%
\end{equation}
and
\begin{equation}
\text{err}_{\text{rel}}(\phi_{\text{exact}},\phi_{\text{Galerkin}}):=\sqrt
{\frac{(\boldsymbol{\tilde{\alpha}}-\mathbf{c})^{\operatorname{T}}\underline
{\underline{\mathbf{\tilde{A}}}}(\boldsymbol{\tilde{\alpha}}-\mathbf{c}%
)}{  \mathbf{c}^{\operatorname{T}}\underline
{\underline{\mathbf{\tilde{A}}}}\mathbf{c} } }\label{relerrorMeasure}%
\end{equation}
as measures for the error of our Galerkin approximation.

\begin{remark}
Since the space on which the sesquilinear form $a(\cdot,\cdot)$ is coercive
differs from the space where it is continuous (cf. \cite{HaDuong}) it is an
open question if the error measure \eqref{errorMeasure} is actually equivalent
to the $H^{-1/2,-1/2}(\Gamma_{h}\times\lbrack0,T])$-norm or if it only
represents an upper bound (up to a constant).
\end{remark}

\section{A Posteriori Error Estimation in Time}

\label{Sec:ErrEst}

In this section we want to introduce a suitable a posteriori error
estimator in time. Since in practice the solution of the
boundary integral equation \eqref{BIE} might be rough (oscillatory or
non-smooth) at certain times and rather smooth at other times it is in general
not optimal to choose a fine time grid with constant step size everywhere on
the time interval $[0,T]$ in order to resolve such a solution. Instead, a
suitably chosen time grid that is adapted to the local irregularities of the
solution with a lower number of variable time steps might be advantageous in
this case and can lead to a more efficient scheme.\newline Since it is in
general not known in advance where the solution is rough the numerical method
should detect automatically where a local refinement of the time grid is
necessary. This is done via the above mentioned a posteriori error estimator
which computes local quantities $\left(  \eta_{i}\right)  _{i=1}^{L}$ that are
associated with the local error of the Galerkin approximation. These
quantities serve as refinement indicators in the adaptive scheme.\newline%
Note that the Galerkin discretization in time is not a
time stepping method but has to be solved for the entire time mesh as a
coupled system. The (localized) error estimator then indicates which time
intervals should be marked for refinement (cf. Figure \ref{Fig:Adap}). Our
numerical experiments indicate that, for problems in wave propagation, it is
essential that an a posteriori error indicator examines all time steps in
history instead of trying to determine within a \textit{time stepping} method
which interval in the history has to be refined and to set back the current
time step to the relevant one in the history.\vspace{3pt}

\begin{figure}[h]
\begin{center}
\includegraphics[width=0.9\textwidth]{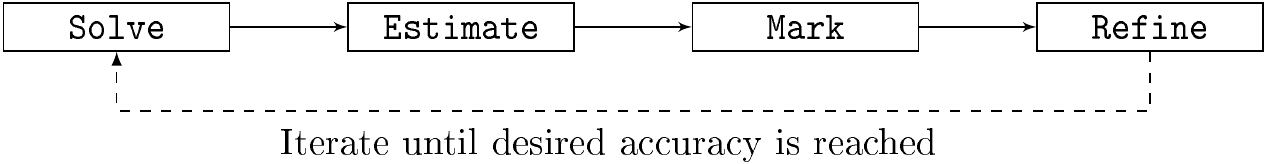}
\end{center}
\caption{Adaptive strategy}%
\label{Fig:Adap}%
\end{figure}

The proposed algorithm currently uses the same time grid everywhere on the
spatial domain in order to compute an approximation. Since the optimal time
grid at different points of the scatterer might not coincide, we compute
suitable refinements of the time grid at different points $x_{0}\in\Gamma$ and
solve the scattering problem in the next step on the union of the proposed
time grids. More precisely we perform the following steps in the time-adaptive algorithm:

\begin{description}

\item[Solve:] Solve the full problem \eqref{BIE} approximately for a
given triangulation and time grid.

\item[Estimate:] Choose a finite set of points $\Xi\subset\Gamma$ and
compute, for each $x_{0}\in\Xi$, refinement indicators $\eta_{x_{0},i}, 1\leq
i\leq L$, which are connected to the time step $t_{i}$ (also denoted as
$t_{x_{0},i})$.

\item[Mark:] Choose a threshold $\alpha\in(0,1)$ and mark for each
$x_{0}\in\Xi$ all time steps $t_{x_{0},k}$ such that $\eta_{x_{0},k}\geq
\alpha\max_{1\leq i\leq L}(\eta_{x_{0},i})$.

\item[Refine:] For fixed $x_{0}\in\Xi$ insert additional timesteps in
the middle of the subintervals $[t_{x_{0},k-1},t_{x_{0},k}]$ and $[t_{x_{0}%
,k},t_{x_{0},k+1}]$, where $t_{x_{0},k}$ is a marked element. This leads to a
refined time grid $\Delta_{x_{0}}$ for $x_{0}\in\Xi$. Choose $\Delta=
\bigcup_{x_{0}\in\Xi}\Delta_{x_{0}}$ as the new time grid and iterate the
procedure until a desired accuracy is achieved.
\end{description}

It remains to define suitable refinement indicators $\eta_{x_{0},i}$. Note
that for the retarded single layer potential we have (see \cite[Thm. 3]{HaDuong})
\[
S:H^{1,-1/2,-1/2}(\Gamma\times [0,T])\rightarrow H^{1/2,1/2}(\Gamma
\times [0,T]),
\]
where
\[
H^{1,-1/2,-1/2}(\Gamma\times [0,T]):=\left\lbrace \phi ; \dot{\phi}\in H^{-1/2,-1/2}(\Gamma\times [0,T])\right\rbrace
\]
and
\[
H^{1/2,1/2}\left(  \Gamma\times\left[  0,T\right]  \right)  :=L^{2}\left(
\left[  0,T\right]  ,H^{1/2}\left(  \Gamma\right)  \right)  \cap
H^{1/2}\left(  \left[  0,T\right]  ,L^{2}\left(  \Gamma\right)  \right)  .
\]
\begin{remark}Recall that in practical computations we solve the variational equation \eqref{VarForm} approximately for $\dot{\phi}$ and obtain an approximate solution $\phi$ of the boundary integral equation in a postprocessing step. The refinement indicators that we will introduce are therefore based on the residual $S\dot{\phi}-\dot{g}$ which is in $H^{1/2,1/2}(\Gamma\times [0,T])$ due to the mapping properties of $S$.
More precisely we have chosen the efficient and reliable a
posteriori error estimator for operators of negative order that was originally
developed for elliptic problems (see \cite{Faermann}) and adapted this
estimator to the retarded potential integral equations.
\end{remark}
The error
estimators are based on an explicit representation of the $H^{1/2}$-seminorm.
For an interval $\omega\subset\mathbb{R}$ it holds
\[
|\xi|_{H^{1/2}(\omega)}^{2}=\int_{\omega}\int_{\omega}\frac{|\xi(t)-\xi
(\tau)|^{2}}{|t-\tau|^{2}}d\tau dt.
\]
For an arbitrary point $x_{0}\in\Xi$ on the boundary $\Gamma$ we define the
residual
\[
r_{x_{0}}(t):=S\dot{\phi}_{\text{Galerkin}}(x_{0},t)-\dot{g}(x_{0},t)
\]
of the Galerkin approximation. Let a time grid as in \eqref{timegrid} be given
and define
\[
\omega_{1}=[t_{0},t_{1}],\qquad\omega_{i}=[t_{i-2},t_{i}],\,2\leq i\leq
l-1,\qquad\omega_{l}=[t_{l-2},t_{l-1}].
\]
Then, local (temporal) refinement indicators are given by
\begin{equation}
\eta_{x_{0},i}:=|r_{x_{0}}|_{H^{1/2}(\omega_{i})}=\int_{\omega_{i}}%
\int_{\omega_{i}}\frac{|r_{x_{0}}(t)-r_{x_{0}}(\tau)|^{2}}{|t-\tau|^{2}}d\tau
dt,\quad i=1,\ldots,l.\label{refInd}%
\end{equation}
Due to the Lipschitz-continuity of the residual $r_{x_{0}}$, the integrand in
\eqref{refInd} is non-singular. However, due to the removable singularity the
double integral has to be evaluated with care. Here, we apply simple
coordinate transformations which move the singularity to the boundary of the
unit square and evaluate \eqref{refInd} using tensor-Gauss-Legendre quadrature
rules. Let
\[
\tilde{r}_{x_{0}}(t,\tau):=\frac{|r_{x_{0}}(t)-r_{x_{0}}(\tau+t)|^{2}}%
{|\tau|^{2}},\qquad\omega_{i}=[c,d]
\]
and
\[
\chi_{1}:t\mapsto(d-c)t+c,\qquad\chi_{2}:t\mapsto(d-c)t.
\]
Then
\begin{align*}
|r_{x_{0}}|_{H^{1/2}(\omega_{i})} &  =\int_{\omega_{i}}\int_{\omega_{i}}%
\frac{|r_{x_{0}}(t)-r_{x_{0}}(\tau)|^{2}}{|t-\tau|^{2}}d\tau dt\\
&  =\int_{c}^{d}\int_{0}^{d-t}\tilde{r}_{x_{0}}(t,\tau)d\tau dt+\int_{c}%
^{d}\int_{c-t}^{0}\tilde{r}_{x_{0}}(t,\tau)d\tau dt\displaybreak[0]\\
&  =\int_{c}^{d}\int_{0}^{t-c}\tilde{r}_{x_{0}}(-t+c+d,\tau)d\tau dt+\int
_{c}^{d}\int_{0}^{t-c}\tilde{r}_{x_{0}}(t,-\tau)d\tau dt\\
&  =(d-c)^{2}\int_{0}^{1}\int_{0}^{t}\underbrace{\tilde{r}_{x_{0}}\left(
-\chi_{1}(t)+c+d,\chi_{2}(\tau)\right)  +\tilde{r}_{x_{0}}(\chi_{1}%
(t),-\chi_{2}(\tau))}_{=:\,\tilde{\tilde{r}}_{x_{0}}(t,\tau)}\,\,d\tau dt.
\end{align*}
With the Duffy-transform $(t,\tau)\mapsto(t,t\tau)$ we map the triangle to the
unit square and obtain
\begin{equation}
|r_{x_{0}}|_{H^{1/2}(\omega_{i})}=(d-c)^{2}\int_{0}^{1}\int_{0}^{1}%
\tilde{\tilde{r}}_{x_{0}}(t,t\tau)t\,\,d\tau dt.\label{refIndtrans}%
\end{equation}
The double integral in \eqref{refIndtrans} can be approximated efficiently
using tensor Gauss-Legendre quadrature since the integrand is well defined in
the interior of the unit square.

\section{Numerical Experiments}

\label{NumEx}

\emph{Convergence tests}\vspace{\baselineskip}

In this section we present the results of numerical experiments. In order to
test the convergence of the method we solve the boundary integral equation
\eqref{BIE} for a spherical scatterer, i.e., $\Gamma= \mathbb{S}^{2}$ in the
time interval $[0,1]$. In a first experiment we consider the purely
time-dependent right-hand side
\begin{equation}
g(x,t) = t^{6}\operatorname{e}^{-4t},\quad(x,t)\in\mathbb{S}^{2}\times[0,1].
\label{rhs1}%
\end{equation}
In this simple scenario the exact solution of the scattering problem is known
explicitly (cf. \cite{Sauter4, Sauter5, Ban1}) and is given by
\begin{equation}
\label{exactSol}\phi(x,t) = 2\partial_{t} g(x,t).
\end{equation}
In a second experiment we consider the right-hand side
\begin{equation}
g(x,t) =g(t)Y_{1}^{0}:= t\sin(3t)^{2}\operatorname{e}^{-t}Y_{1}^{0}%
,\quad(x,t)\in\mathbb{S}^{2}\times[0,1], \label{rhs2}%
\end{equation}
where $Y_{1}^{0}$ is a spherical harmonic of degree $1$ and order $0$. The
exact solution of the problem is in this case given by
\[
\phi(x,t) = \left[  2\partial_{t} g(t) + 2\int_{0}^{t} \sinh(\tau)\partial_{t}
g(t-\tau)d\tau\right]  Y_{1}^{0}.
\]
For both configurations we discretize the scatterer using 2568 triangles and
approximate the solution in space with piecewise linear basis functions,
resulting in 1286 degrees of freedom in space.\newline The convergence of the
method with respect to the stepsize $\Delta t$ is depicted in Figure
\ref{ErrPlot} for different orders of the time discretization. The error was
computed using the error measure from Section \ref{Sec:Norm}.

\begin{figure}[th]
\centering
\subfigure[$g(x,t) = t^6\operatorname{e}^{-4t}$]{
\centering
\includegraphics[width=0.47\textwidth]{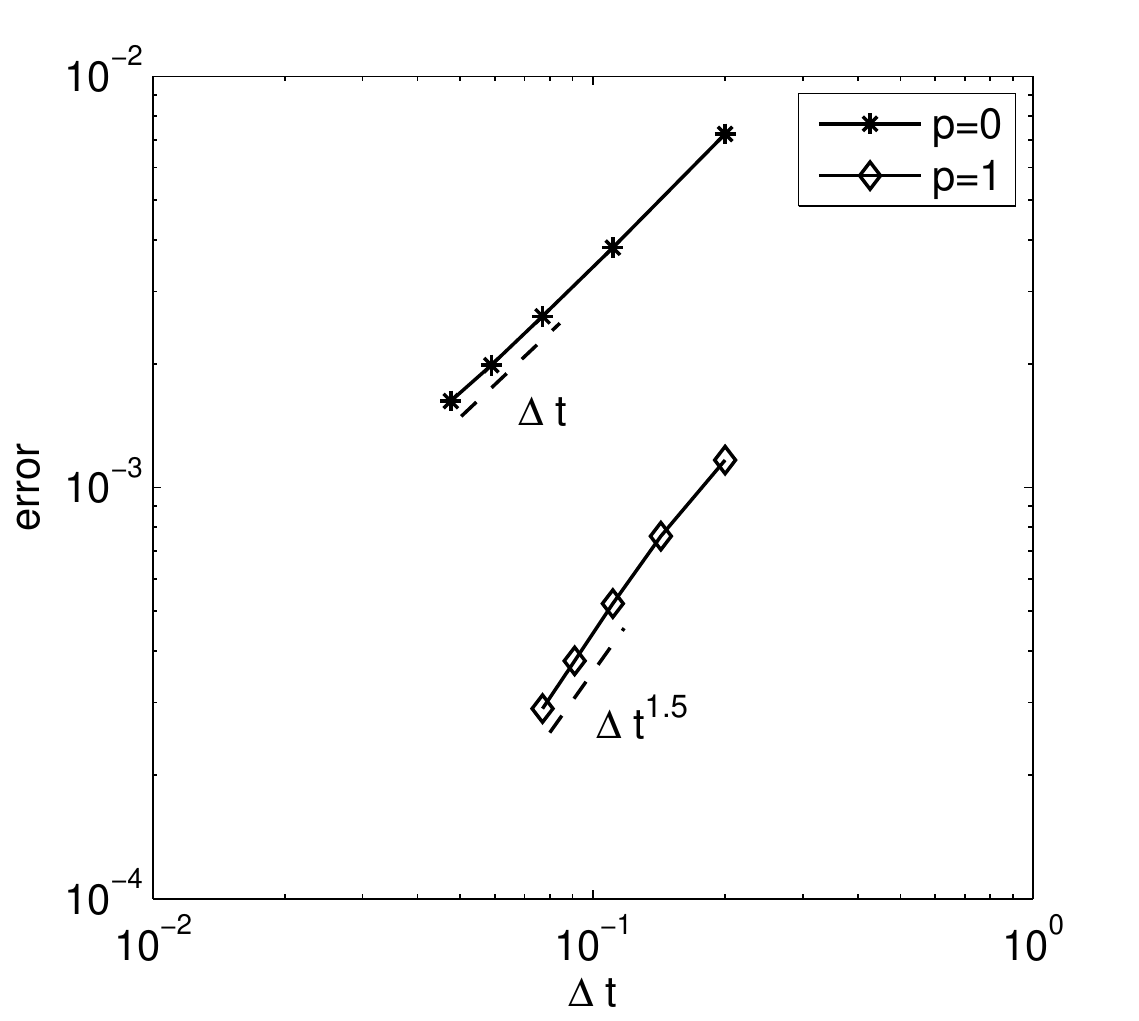}
} \hfil
\subfigure[$g(x,t) = t\sin(3t)^2\operatorname{e}^{-t}Y_1^0$]{
\centering
\includegraphics[width=0.47\textwidth]{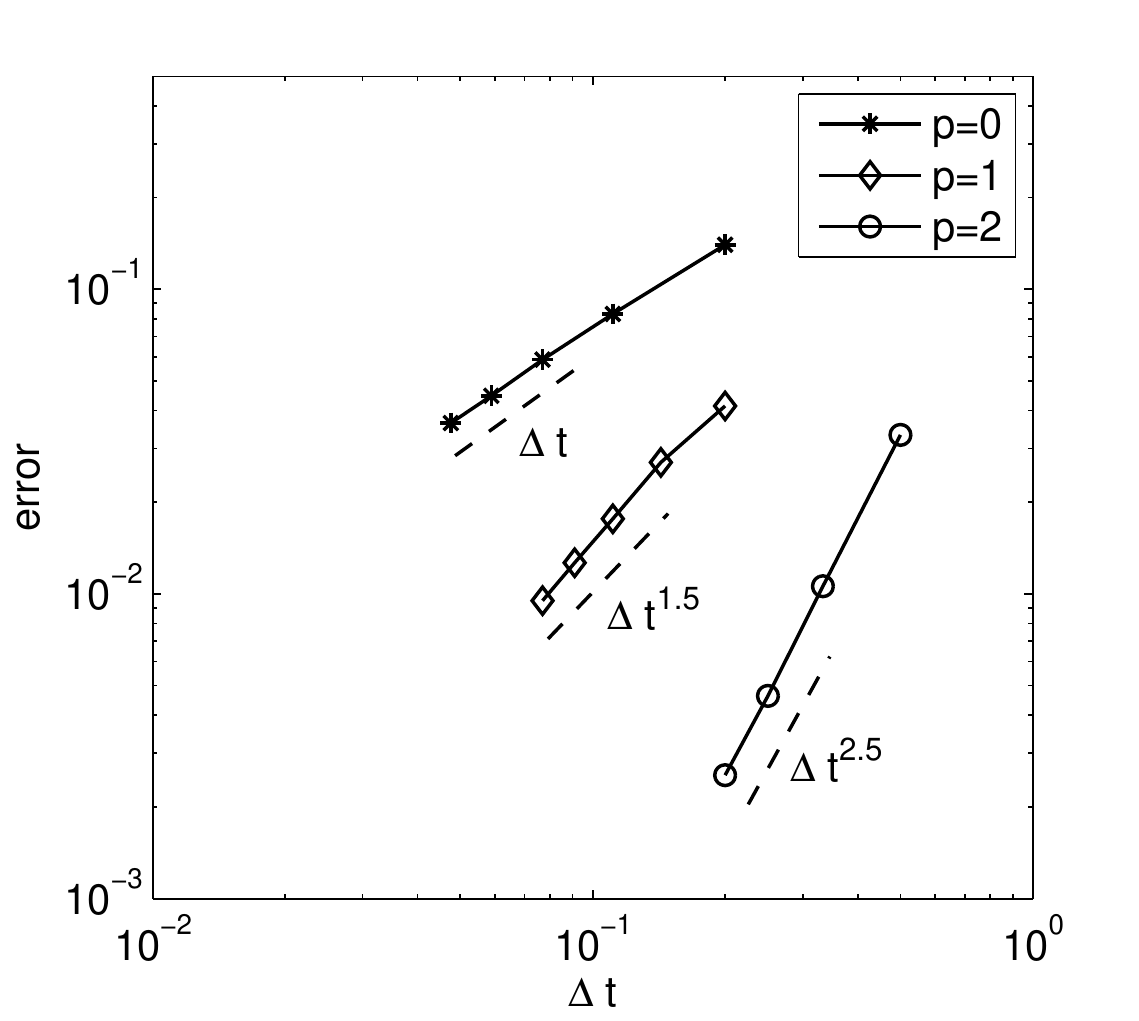}
}\caption{Convergence plots with respect to the stepsize in time. }%
\label{ErrPlot}%
\end{figure}

Theoretical convergence rates of the Galerkin approach
using piecewise polynomial basis functions were investigated in \cite{Bam1}
for $p>0$. Since our PUM basis functions have the same approximation
properties as the classical basis functions, the numerical experiments raise
the important question whether these theoretical error bounds are sharp in
general and the considered case of scattering from a sphere has special
properties or, possibly, the discrete evaluation of the energy norm gains
from, e.g., superconvergence properties
. This will be a topic of future
investigations.\newline

\emph{Influence of the quadrature order}\vspace{\baselineskip}

In Section \ref{Sec:Entries} we showed that the entries of the boundary
element matrix can be computed accurately with tensor Gauss quadrature rules.
Here we want to test the influence of the quadrature order on the overall
accuracy of the approximation. As an example we choose again a spherical
scatterer that is discretized using 616 triangles. We consider the time
interval $[0,5]$ which is subdivided into 20 equidistant subintervals. Note
that the configuration was chosen such that the stepsize in time corresponds
to the average diameter of the triangles. For the approximation we use
piecewise linear basis functions in space and time (i.e. $p=1$). As right-hand
side we choose a single Gaussian bump that travels in $x_{1}$ direction:
\begin{equation}
\label{Gaussbump}g(x,t) = \cos(t-x_{1})\operatorname{e}^{-6(t-x_{1}-5)^{2}},
\quad x = (x_{1},x_{2},x_{3})^{\operatorname{T}}%
\end{equation}

In the following we compute the arising boundary element matrices with
different accuracies. With $n_{\text{sing}},n_{\text{near}},n_{\text{far}}$ we
denote the number of quadrature points that are used in each direction for the
singular (regularized), the regular near field and the regular far field
integrands, respectively (cf. \cite{sauter2010boundary}). As a reference
solution we compute an approximation $\phi_{\text{high}}$ with $n_{\text{sing}%
}=20$, $n_{\text{near}}=15$ and $n_{\text{far}}=12$ on the same temporal and spatial grid mentioned above such that the discretization error is not visible. In Table \ref{quadAcc}
the results for different numbers of quadrature nodes are depicted. We measure
the error between $\phi_{\text{high}}$ and the Galerkin solution using lower
number of quadrature nodes in the error measure of Section \ref{Sec:Norm} and
in the $L^{2}([0,5],L^{2}(\Gamma))$-norm. \vspace{0pt}

\begin{table}[th]
\begin{center}%
\begin{tabular}
[c]{c|c|c|c|c}%
$n_{\text{sing}}$ & $n_{\text{near}}$ & $n_{\text{far}}$ & $\text{err}%
_{\text{rel}}(\phi_{\text{high}},\phi_{\text{Galerkin}})$ & rel. $L^{2}%
$-error\\\hline
10 & 8 & 6 & $1.86\cdot10^{-6}$ & $1.86\cdot10^{-6}$\\
8 & 6 & 5 & $1.43\cdot10^{-5}$ & $1.26\cdot10^{-5}$\\
6 & 5 & 4 & $1.03\cdot10^{-4}$ & $8.74\cdot10^{-5}$\\
5 & 4 & 3 & $5.36\cdot10^{-4}$ & $4.58\cdot10^{-4}$\\
5 & 3 & 3 & $1.43\cdot10^{-3}$ & $1.40\cdot10^{-3}$\\
4 & 3 & 3 & $1.87\cdot10^{-3}$ & $1.81\cdot10^{-3}$\\
4 & 3 & 2 & $2.48\cdot10^{-3}$ & $2.74\cdot10^{-3}$
\end{tabular}
\end{center}
\caption{Influence of quadrature on the accuracy of the Galerkin approximation
}%
\label{quadAcc}%
\end{table}

It becomes evident that a low number of quadrature nodes is sufficient to
compute stable and reasonably accurate solutions. Note that the results
obtained in Table \ref{quadAcc} depend on the CFL number. Whereas a large CFL
number is unproblematic with regard to the quadrature problem, a small CFL
number, i.e. the step size in time is much smaller than the diameter of the
triangles, typically requires a higher number of spatial quadrature nodes in
order to obtain accurate solutions. \\

\emph{Long term stability}\vspace{\baselineskip}

In order to test the stability of the method for a longer time interval we
consider again a spherical scatterer and solve problem \eqref{BIE} for the
right-hand side $g(x,t)=t^{4}\operatorname{e}^{-2t}$ for $T=40$. We discretize
the time interval using 120 equidistant timesteps and local polynomial
approximation spaces of degree $p=1$ resulting in 239 degrees of freedom in
time. The sphere is discretized with 616 triangles, which leads to 310 degrees
of freedom if piecewise linear approximation in space is used. The Galerkin
solution at $x=(1,0,0)^{\text{T}}$ is depicted in Figure \ref{long_time}. We
compare this result with the exact solution of the problem and with a
numerical solution that is obtained using BDF2-convolution quadrature using
also 120 time steps for the time discretization.

\begin{figure}[ht]
\centering
\includegraphics[width=1.0\textwidth]{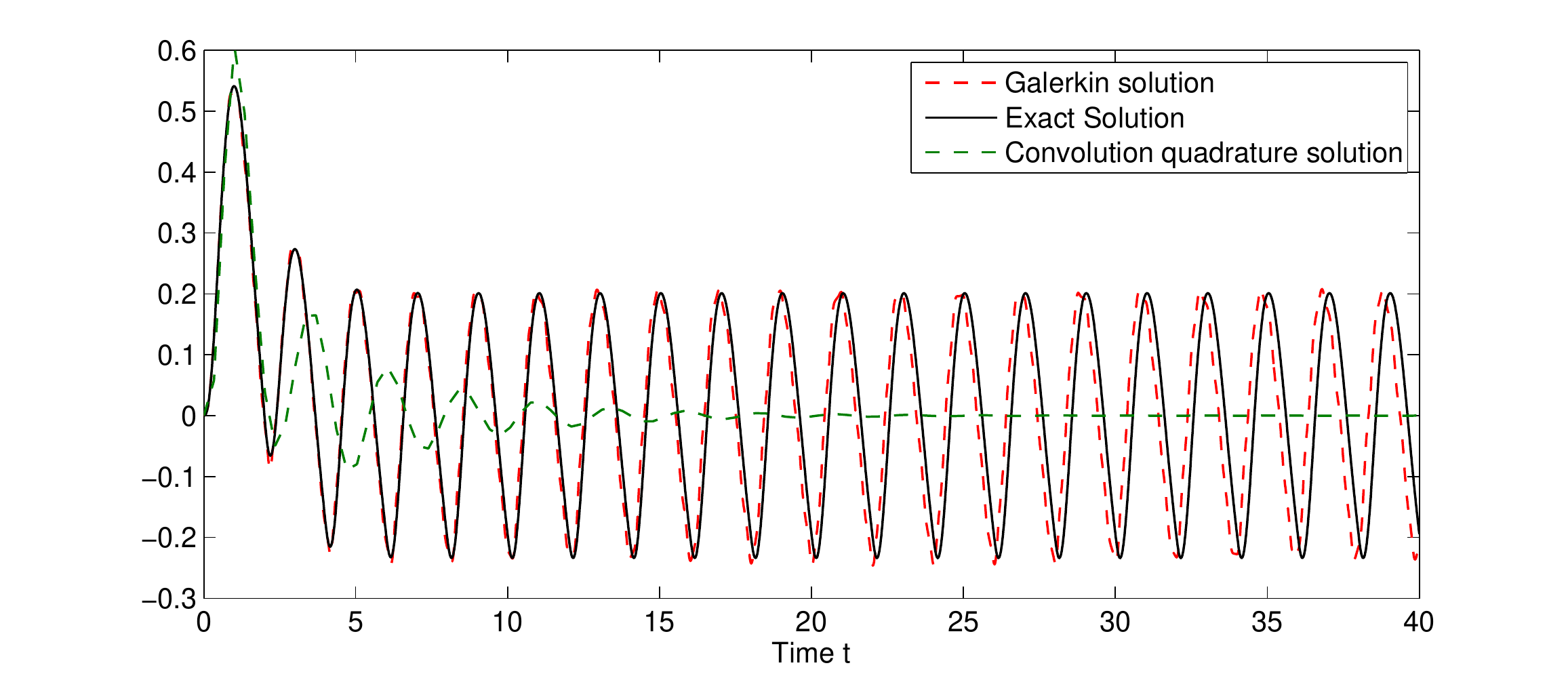}\caption{Galerkin
and Convolution Quadrature solution compared to the exact solution of
\eqref{BIE} for $\Gamma= \mathbb{S}^{2}$ and $g(x,t)=t^{4}\operatorname{e}%
^{-2t}$ in the time interval $[0,40]$.}%
\label{long_time}%
\end{figure}

It can be observed that the space-time Galerkin method leads to stable
solutions also for long time computations. Due to the energy preservation of
the method no numerical damping can be observed which is, e.g., typically the
case for time discretizations schemes based on convolution quadrature (cf.
Fig. \ref{long_time}). The slight shift of the numerical solution that is
present in Figure \ref{long_time} compared to the exact solution for large
times is due to the insufficient approximation in space and furthermore due to
the surface approximation of the sphere by flat triangles.\vspace
{\baselineskip}

\emph{A non-convex scatterer}\vspace{\baselineskip}

In Figure \ref{Torus} we consider the scattering of a Gaussian pulse from a
torus. We set the incoming wave as
\[
u_{\text{inc}}(x,t) := 8\cos(t-x_{1})\operatorname{e}^{-1.5(t-x_{1}-5)^{2}}
\quad\text{for}\quad(x,t)\in\mathbb{R}^{3}\times[0,12]
\]
and set the right hand side of the scattering problem \eqref{BIE} to
\[
g(x,t) = -u_{\text{inc}}(x,t)\quad\text{for}\quad(x,t)\in\Gamma\times[0,12].
\]
As illustrated in Figure \ref{Torus} the incoming wave travels in $x_{1}%
$-direction towards the torus. We discretize the torus with 1152 flat
triangles and use piecewise linear polynomials for the approximation in space.
For the temporal discretization we use 100 equidistant timesteps in the
interval [0,12] and approximate with local polynomial approximations spaces in
time of degree 1.

\begin{figure}[ht]
\centering
\includegraphics[width=1.0\textwidth]{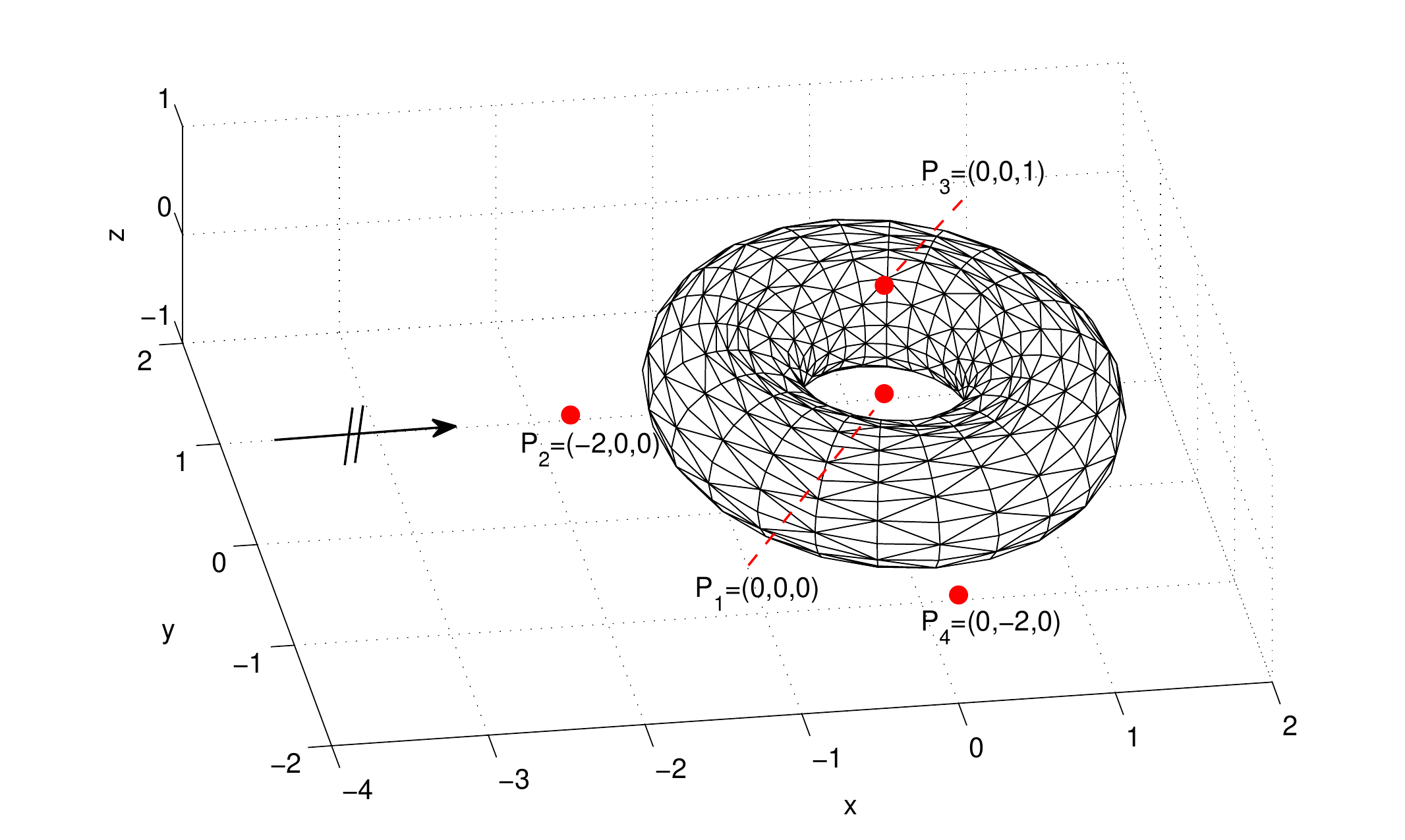}\caption{Scattering of a
Gaussian pulse from a torus with observation points $P_{1},\cdots, P_{4}$.}%
\label{Torus}%
\end{figure}

We compute the scattered wave at four observation points $P_{1},\ldots, P_{4}$
in the exterior domain of the torus. The results are illustrated in Figure
\ref{Torus_sol}. As expected, the scattered wave at the points $P_{1}$ and
$P_{3} $ exhibits small oscillations even after the incoming wave has passed.
This is due to the non-convexity of scatterer and the associated waves that
are trapped in the hole of the torus.\vspace{\baselineskip}

\begin{figure}[ht]
\centering
\subfigure[Solution $u(x_0,t)$ of the scattering problem at $x_0 = P_1 = (0,0,0).$]{
\centering
\includegraphics[width=0.47\textwidth]{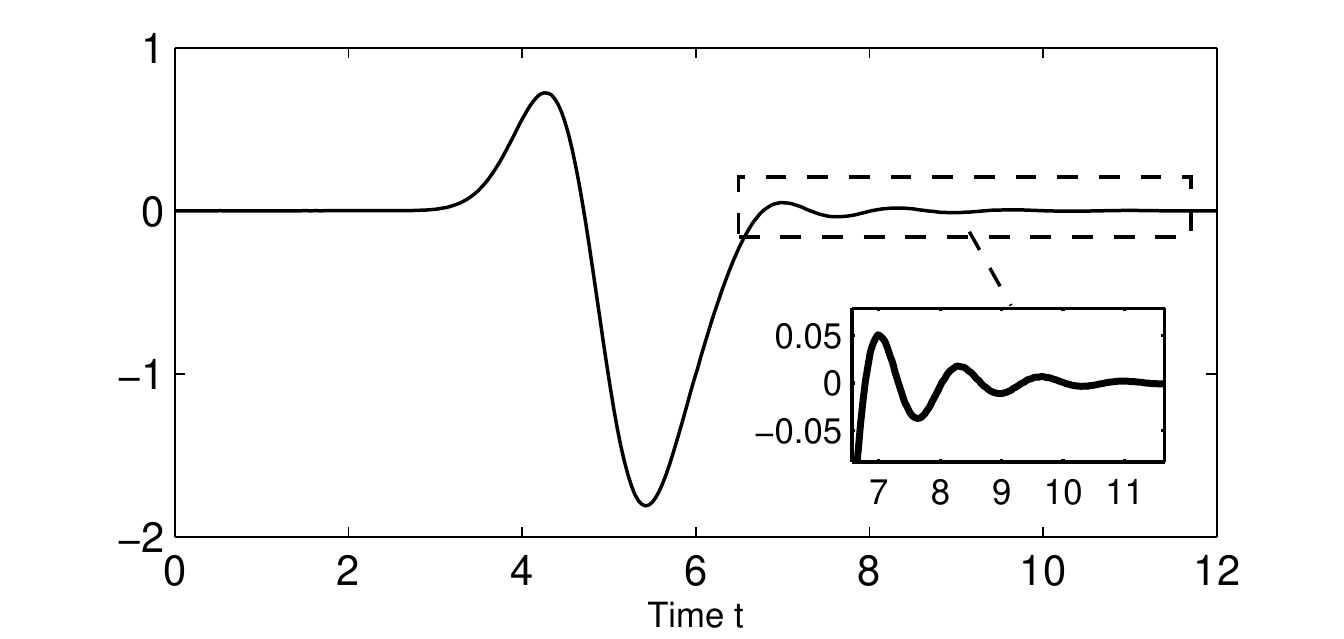}
} \hfil
\subfigure[Solution $u(x_0,t)$ of the scattering problem at $x_0 = P_3 = (0,0,1).$]{
\centering
\includegraphics[width=0.47\textwidth]{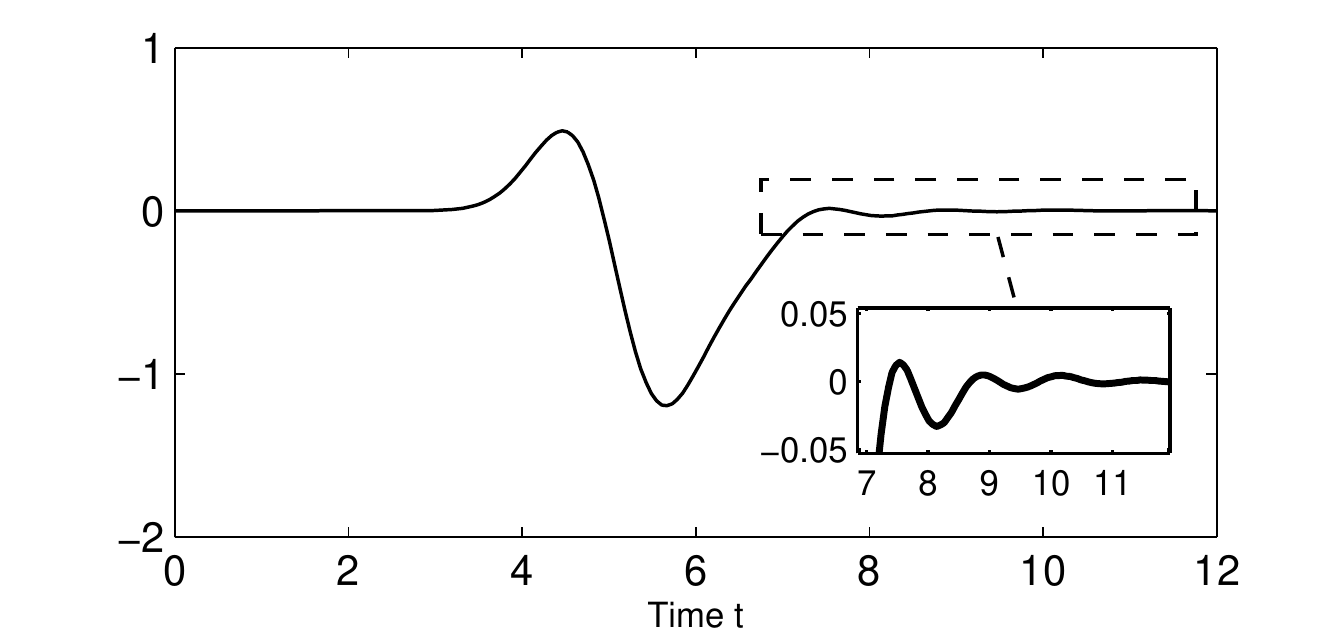}
}\newline%
\subfigure[Solution $u(x_0,t)$ of the scattering problem at $x_0 = P_2 = (-2,0,0).$]{
\centering
\includegraphics[width=0.47\textwidth]{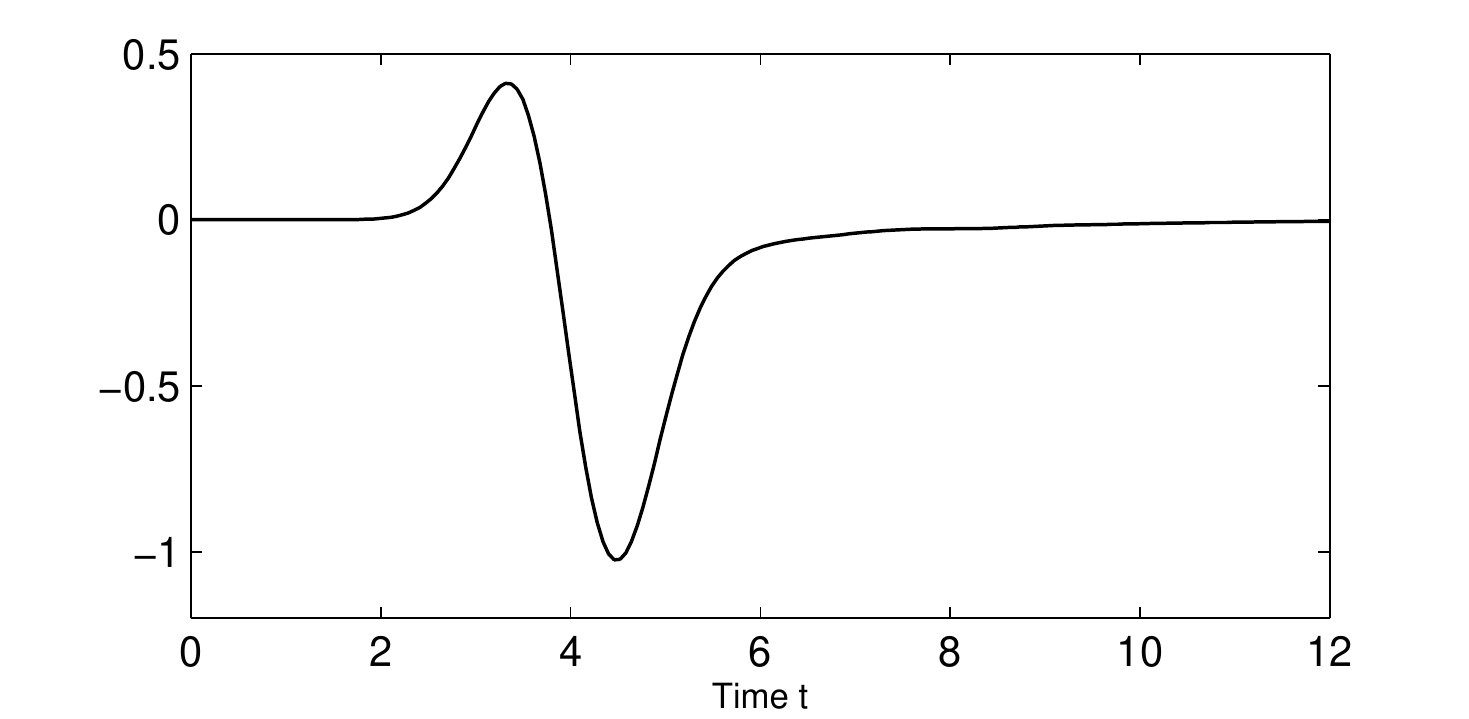}
}
\subfigure[Solution $u(x_0,t)$ of the scattering problem at $x_0 = P_4 = (0,-2,0).$]{
\centering
\includegraphics[width=0.47\textwidth]{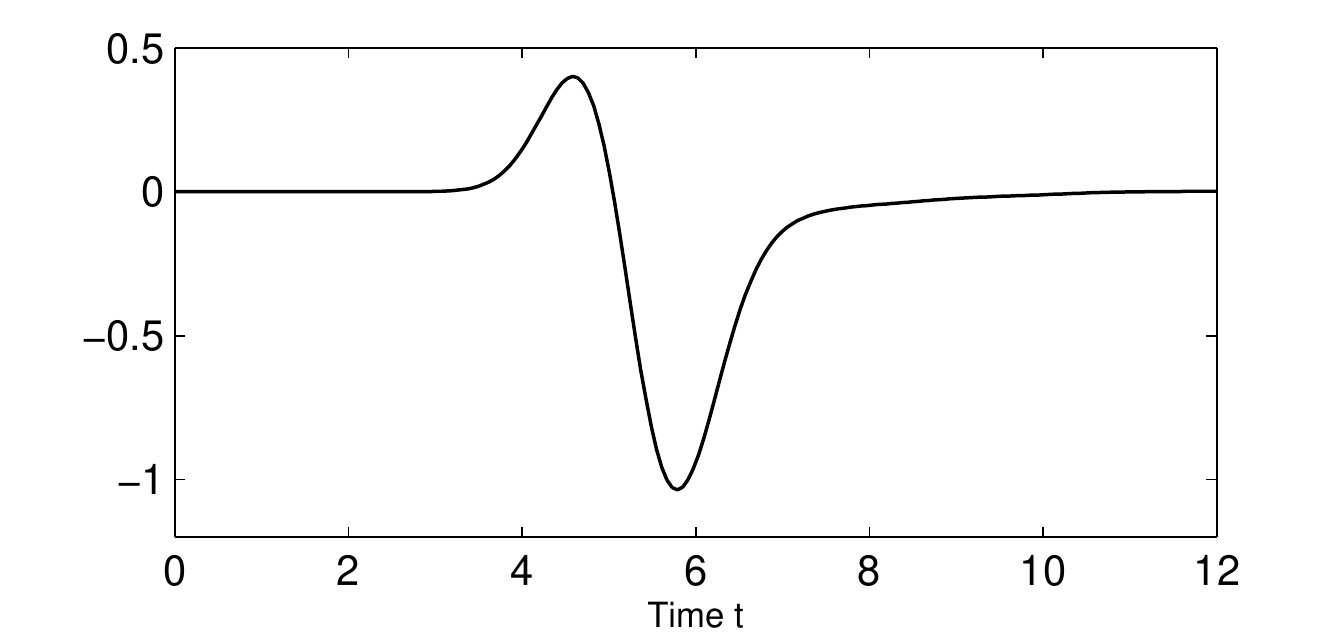}
}\caption{Solutions of the scattering problem from the torus in Figure
\ref{Torus} for the points $P_{1},\cdots, P_{4}$ in the exterior domain. }%
\label{Torus_sol}%
\end{figure}

\emph{Adaptivity in time}\vspace{\baselineskip}

In this subsection we present numerical experiments that show the performance
of the adaptive strategy described in Section \ref{Sec:ErrEst}. First we adopt
again the setting of a spherical scatterer $\Gamma=\mathbb{S}^{2}$ and a right
hand side of the form $g(x,t)=g(t)Y_{n}^{m}$. In this case the boundary
integral equation \eqref{BIE} decouples and leads to the purely time-dependent
problem: Find $\phi(t)$ such that
\begin{equation}
\int_{0}^{t}\mathcal{L}^{-1}(\lambda_{n})(\tau)\phi(t-\tau)d\tau
=g(t),\,\,\,t\in\lbrack0,T],\label{ATDRP:1dimProblem}%
\end{equation}
where $\mathcal{L}^{-1}$ denotes the inverse Laplace transform and
$\lambda_{n}(s)=I_{n+\frac{1}{2}}(s)K_{n+\frac{1}{2}}(s)$, where $I_{\kappa}$
and $K_{\kappa}$ are modified Bessel functions (cf. \cite{Sauter5} for
details). Note that $\phi(t)Y_{n}^{m}$, where $\phi(t)$ satisfies
\eqref{ATDRP:1dimProblem}, is a solution of the full problem \eqref{BIE}. It
is convenient to observe the behavior of the time-adaptive scheme (i.e. the
refinement process) using this one-dimensional problem since no spatial
discretization takes place that might have an influence on the results. In the
following we solve \eqref{ATDRP:1dimProblem} by a Galerkin method for two
different right-hand sides.\newline In a first experiment we set $n=0$ and
consider $g(t)=t^{1.5}\operatorname{e}^{-t}$ on the time interval $[0,1]$. The
exact solution of this problem is illustrated in Figure \ref{Fig:sol1}(a).
Since the solution involves the first derivative of $g$ (cf. \eqref{exactSol})
it is nonsmooth at $t=0$. For the numerical solution of this problem we use
local polynomial approximation spaces of degree $p=1$ and use the error
measure of Section \ref{Sec:Norm}. Figure \ref{Fig:sol1}(b) shows the error of
the adaptive scheme compared to the approximation using equidistant time
steps. Due to the nonsmoothness of the solution the equidistant approximation
converges only with suboptimal rate. The adaptive algorithm converges
significantly faster due to the successive refinement of the time grid towards
the origin.\newline In the second experiment we again set $n=0$ and consider
the right-hand side $g(t)=-\sin(10t)t^{3}\operatorname{e}^{-48(t-1)^{2}}$ on
the time interval $[0,4]$. The exact solution of this problem is depicted in
Figure \ref{Fig:sol2}(a). In this case the solution is smooth but oscillatory
around $t=1$ and $t=3$. In Figure \ref{Refinement} different refinement levels
of the adaptive approximation are shown. We start with a coarse time grid
consisting of only 4 time steps and iterate the adaptive procedure ten times.
It can be seen that at first the bump around $t=1$ is refined and only
afterwards the refinement around $t=3$ begins. Intuitively this seems to be
the right behavior since we solve a time-dependent wave propagation problem.
Thus the solution at a later time can only be accurately resolved if the
solution is already sufficiently approximated at earlier times. This behavior
of the adaptive scheme repeats for higher refinement levels as indicated by
the time grids at levels 8,9 and 10. The errors of the adaptive and the
equidistant approximation are depicted in Figure \ref{Fig:sol2}(b).\vspace
{0pt}

\begin{figure}[ht]
\centering
\subfigure[Exact solution]{
\centering
\includegraphics[width=0.47\textwidth]{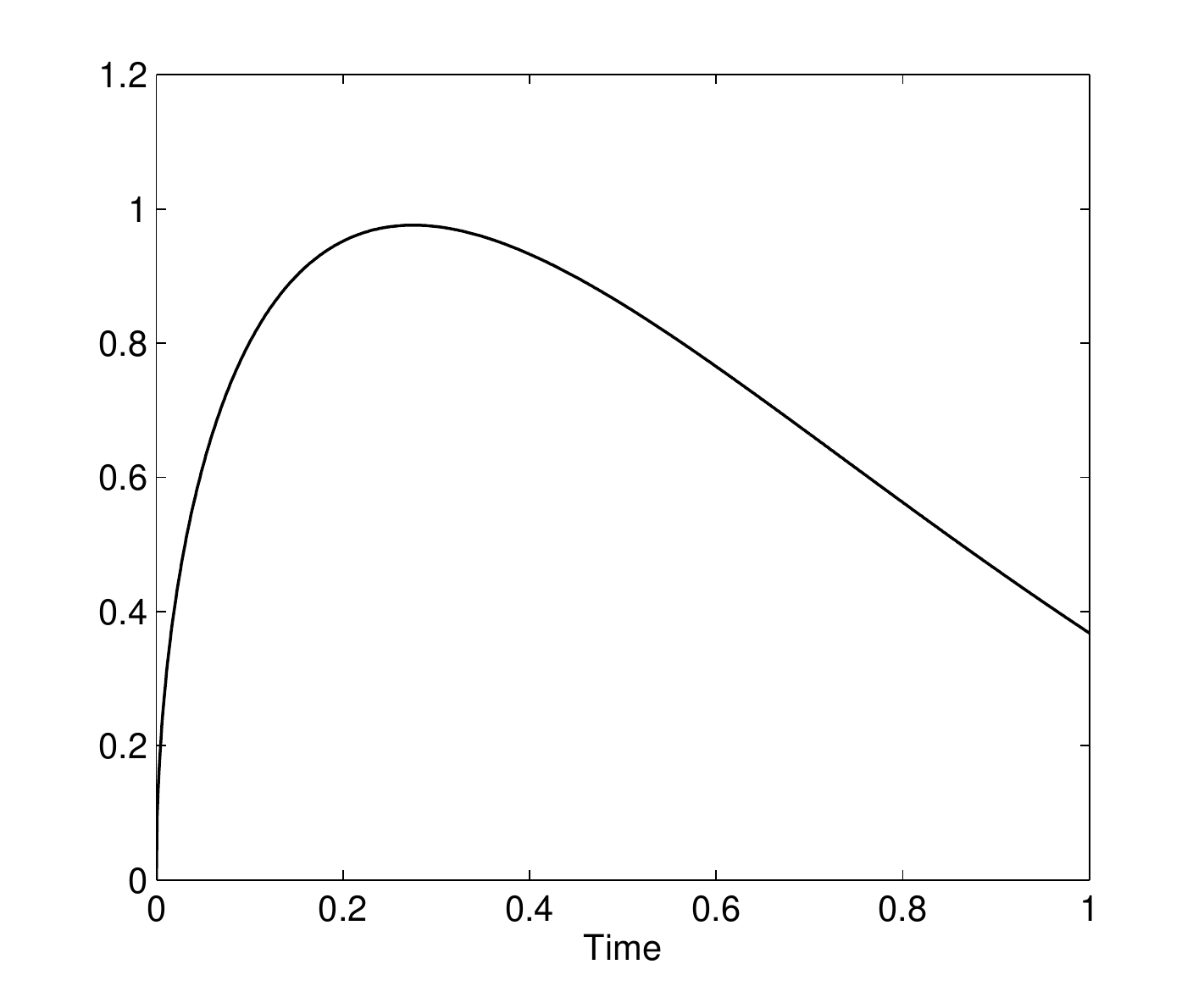}
} \hfil
\subfigure[Corresponding errors]{
\centering
\includegraphics[width=0.47\textwidth]{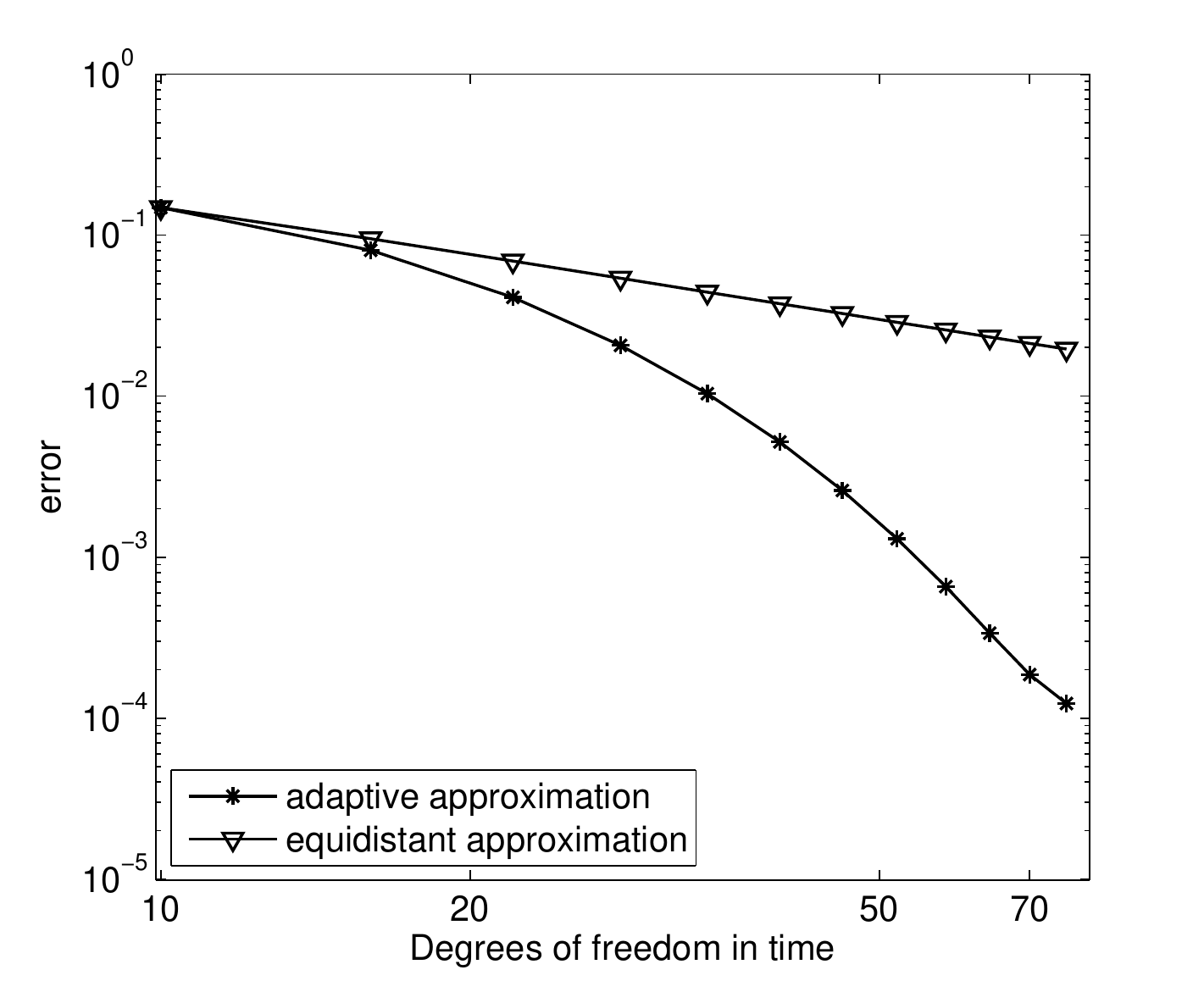}
}\caption{Solution $\phi(t)$ of problem \eqref{ATDRP:1dimProblem} for $g(t) =
t^{1.5}\operatorname{e}^{-t}$ and the corresponding errors of the adaptive and
the equidistant approximation. }%
\label{Fig:sol1}%
\end{figure}

\begin{figure}[ht]
\centering
\subfigure[Exact solution]{
\centering
\includegraphics[width=0.47\textwidth]{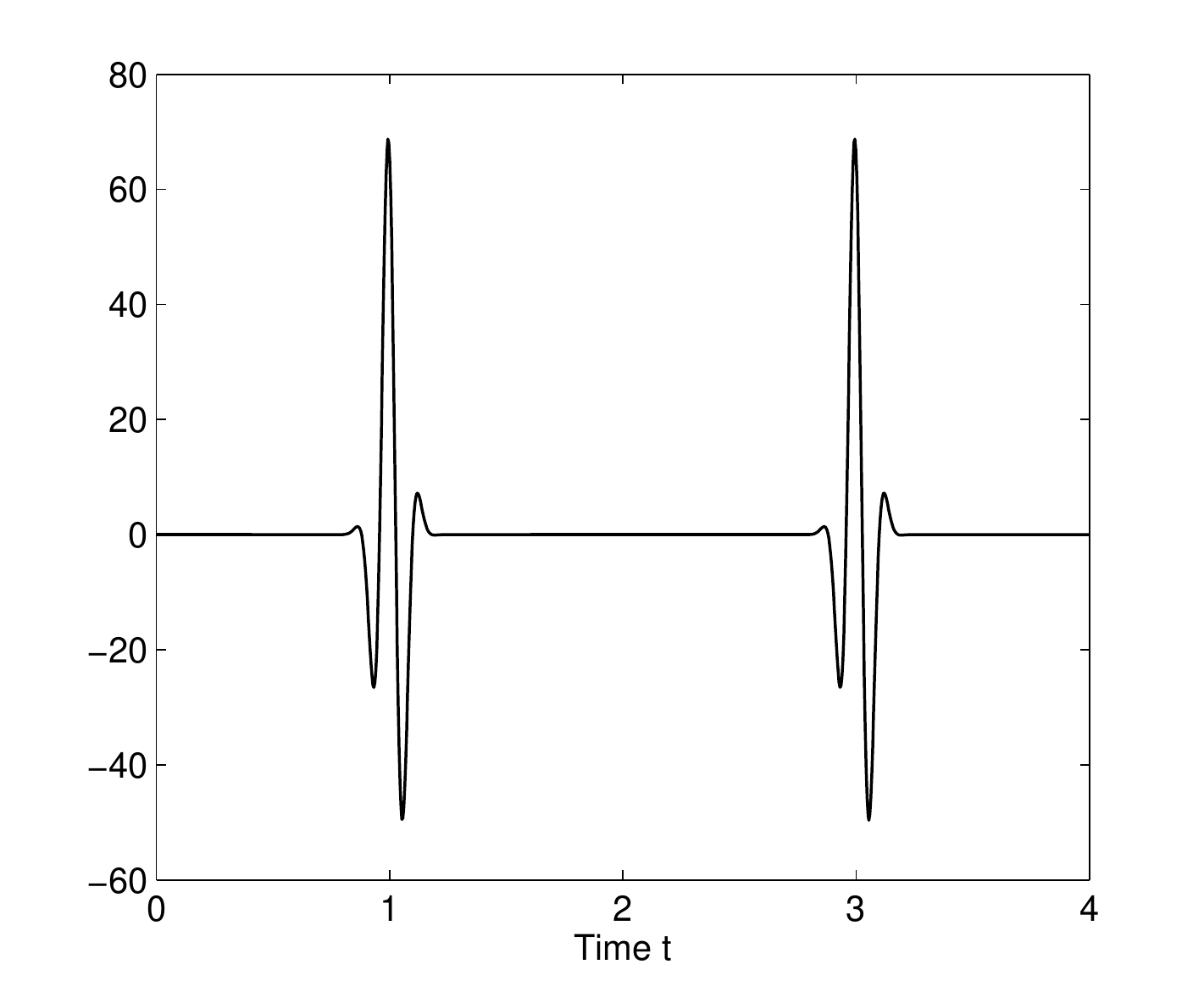}
} \hfil
\subfigure[Corresponding errors]{
\centering
\includegraphics[width=0.47\textwidth]{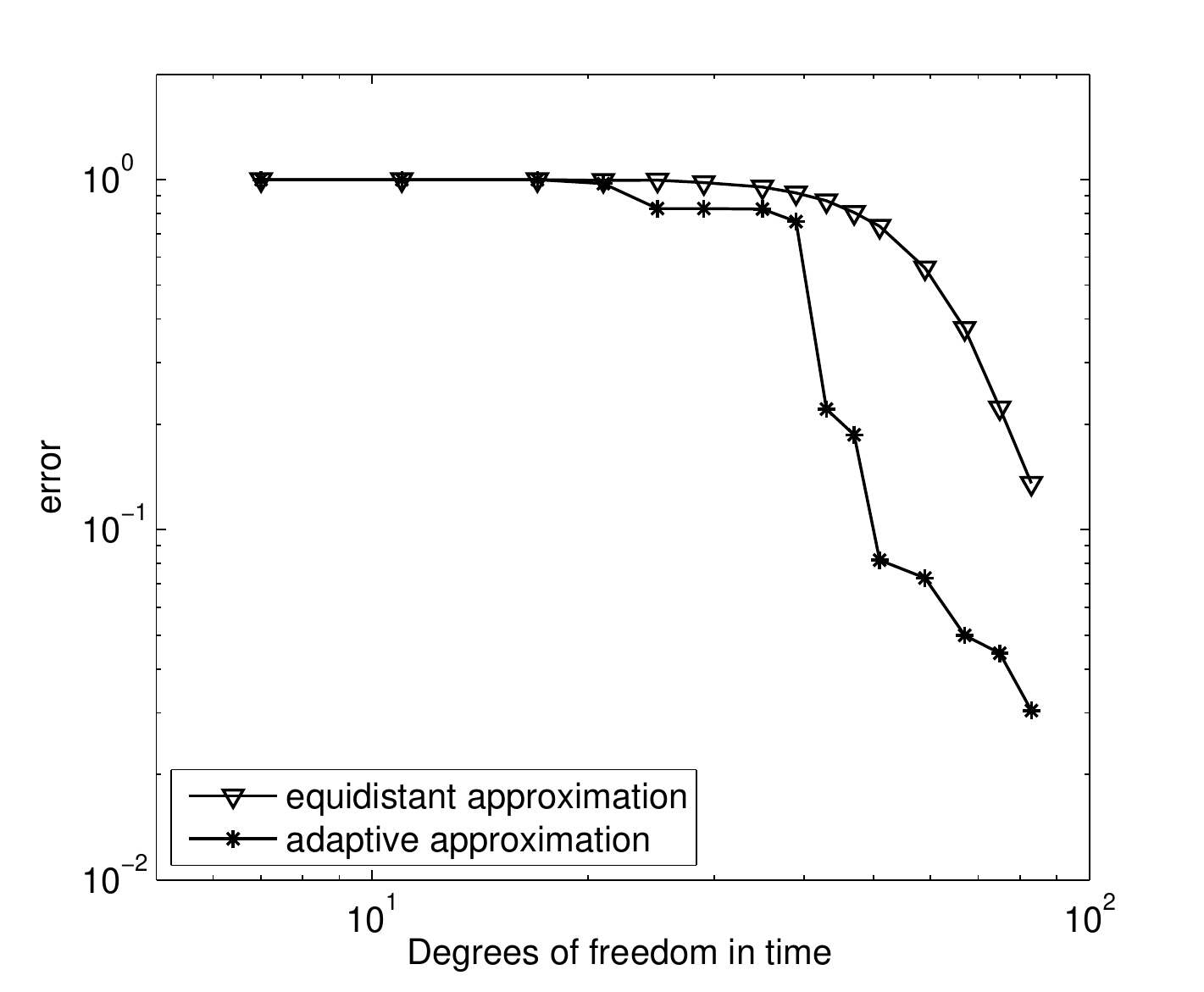}
}\caption{Solution $\phi(t)$ of problem \eqref{ATDRP:1dimProblem} for $g(t) =
-\sin(10t)t^{3}\operatorname{e}^{-48(t-1)^{2}}$ and the corresponding errors
of the adaptive and the equidistant approximation. }%
\label{Fig:sol2}%
\end{figure}

\begin{figure}[th]
\centering
\includegraphics[width=\textwidth]{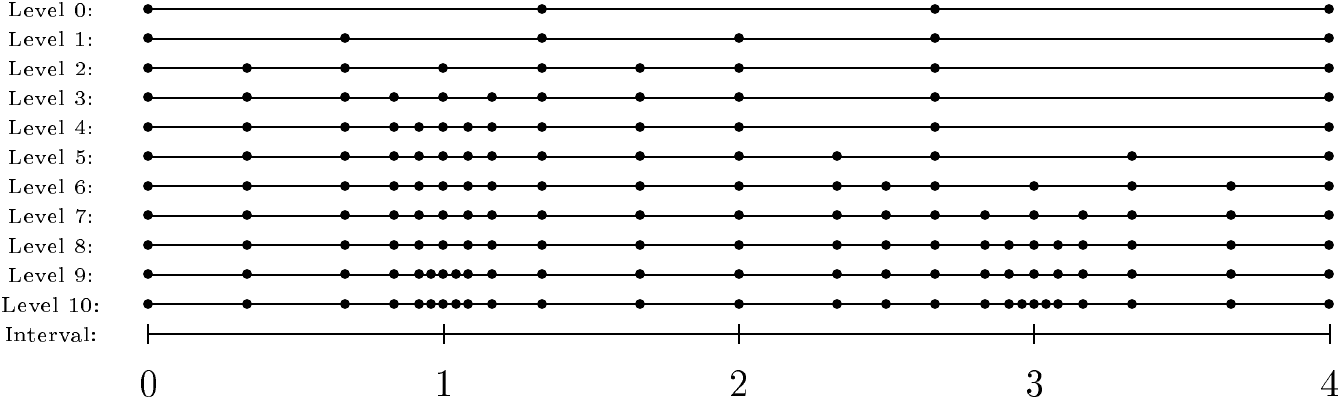}\caption{Different
refinement levels}%
\label{Refinement}%
\end{figure}

At last we test the adaptive algorithm for a full three-dimensional problem.
We use a spherical scatterer discretized into 616 triangles and we set
\begin{equation}
\label{rhs3D}g(x,t) = -H(t-x_{1}-2)\frac{(t-x_{1}-2)^{1.5}}{(t-x_{1}-2)^{2}+5}%
\end{equation}
for $x\in\mathbb{S}^{2}$ and $t\in[0,25]$. $H(\cdot)$ denotes the Heaviside
step function. This right-hand side corresponds again to an incoming wave
traveling in $x_{1}$-direction towards the scatterer which is met at $t=1$.
Due to the low regularity of the right-hand side we expect also low regularity
of the solution of the corresponding boundary integral equation. In Figure
\ref{Adap3d} two approximations of $\phi(x,t)$ at $(-1,0,0)^{\operatorname{T}%
}$ and $(1,0,0)^{\operatorname{T}}$ are illustrated . In both cases the
approximations were computed using local polynomial approximation spaces of
degree $p=1$ in time and piecewise linear functions in space. The solid lines
represent the numerical solution that was obtained using the time-adaptive
scheme. We started the adaptive algorithm with the coarse time grid $\{5\cdot
l,l=0,\ldots,5 \}$ and used the observation points $\Xi= \left\lbrace
(-1,0,0)^{\operatorname{T}}, (0,1,0)^{\operatorname{T}}%
,(1,0,0)^{\operatorname{T}} \right\rbrace $ for the refinement indicators. The
time grid after 6 iterations is shown in Figure \ref{Adap3d}. The dashed lines
represent the numerical solution that was obtained using an equidistant time
grid with the same number of timesteps. \begin{figure}[th]
\centering
\includegraphics[width=1.0\textwidth]{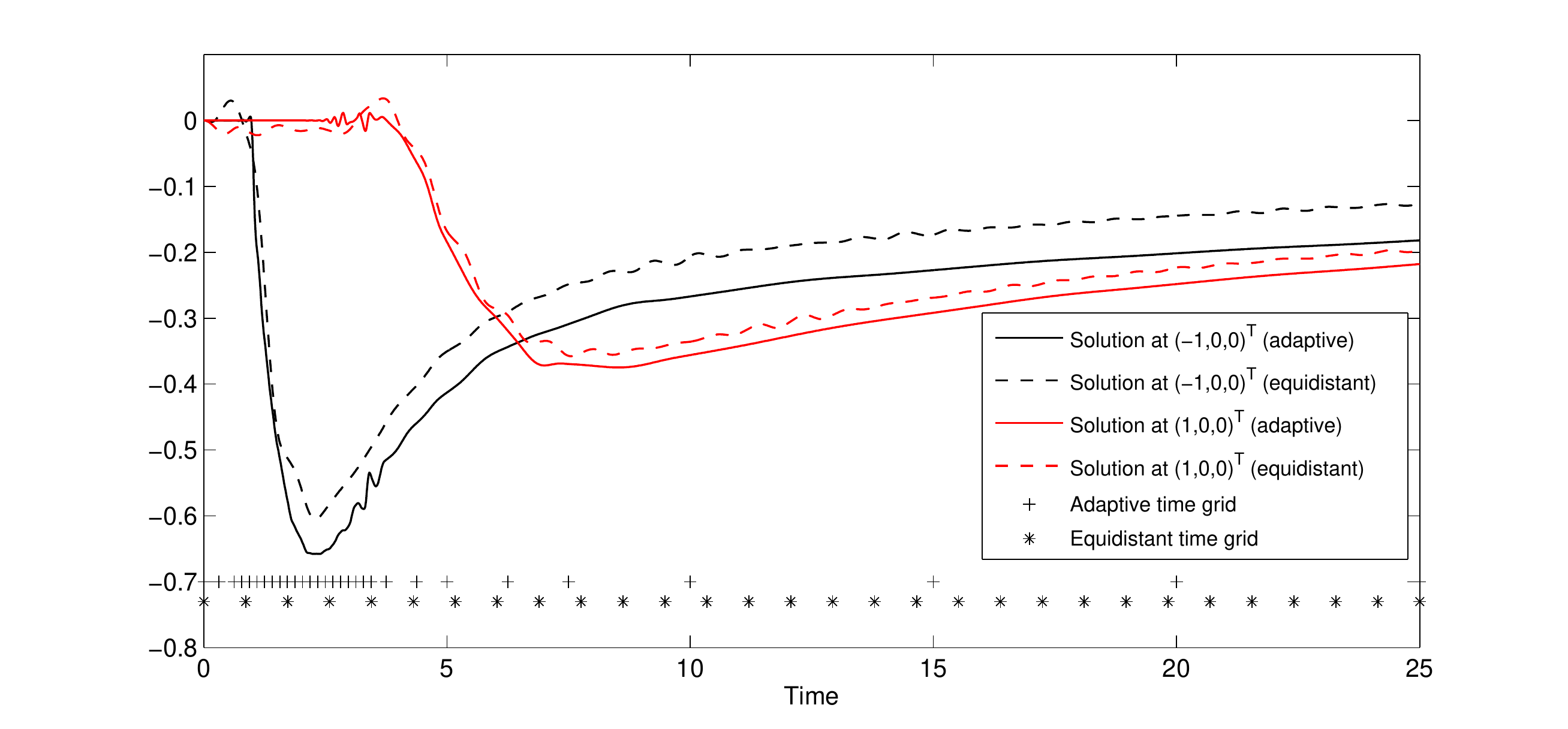}\caption{Comparison of the
equidistant and adaptive approximation for problem \eqref{rhs3D}. }%
\label{Adap3d}%
\end{figure}

The adaptive time grid is especially refined in the time interval $[1,3]$. The
nonsmoothness of the solution in this interval is not surprising since the
nonsmooth part of the incoming wave propagates through the obstacle at these
times. Due to the refined time grid the adaptive solution at
$(-1,0,0)^{\operatorname{T}}$ nicely captures the nonsmooth behavior of the
solution in this time interval. The insufficient accuracy of the equidistant
approximation in $[1,3]$ leads to a considerable shift of the numerical
solution at later times that cannot be corrected with additional timesteps
there. Similar observations can be made for the solution at
$(1,0,0)^{\operatorname{T}}$.\newline Once the nonsmoothness of the right-hand
side has passed the scatterer the solution seems considerably more smooth and
large time steps are sufficient for an accurate approximation.

\section{Conclusions\label{SecConcl}}

In this paper, we have introduced a fully discrete space-time Galerkin method
for solving the retarded potential integral equations. The focus was on the
efficient approximation of the integrals for building the system matrix, in
particular, for $C^{\infty}$ temporal basis functions and
combinations/convolutions thereof. It turned out that Gauss quadrature -- in
combination with regularizing coordinates for singular integrands -- converges
nearly as fast as for analytic functions in the accuracy regime of interest
$\left[  10^{-1},10^{-8}\right]  $.

In addition we have introduced an a posteriori error estimator for retarded
potential integral equations which is also employed for driving the adaptive
refinement of the time mesh. Numerical experiments show that the resulting
local error indicator captures very well local irregularities and oscillations
in the solution and the resulting time meshes are much more efficient compared
to uniform mesh refinement.\\
The adaptive refinement of the time mesh that we introduced in this paper is an important intermediate step towards a full space-time adaptive scheme. This will be an important further develpment in order to obtain a competitive method (see Remark \ref{rem_longTerm}).\\
Future work should furthermore address application to the Maxwell system and the theoretical analysis of the error estimator.\vspace{\baselineskip}

{\bf Acknowledgment.}  The second author gratefully acknowledges the support given by the Swiss National Science Foundation (No. P2ZHP2\_148705).

\bibliographystyle{abbrv}
\bibliography{mybib}

\end{document}